\newtheorem{theo}{Theorem}
\newtheorem{lem}[theo]{Lemma}
\newtheorem{conj}[theo]{Conjecture}
\theoremstyle{definition}
\theoremstyle{remark}
\newcounter{casenum}[theo]
\newcounter{subcasenum}[theo]
\newcounter{claimnum}[theo]
\begin{document}
\thispagestyle{plain}

\begin{center} {\Large Nonregular graphs with a given maximum degree attaining maximum spectral radius
}
\end{center}
\pagestyle{plain}
\begin{center}
	{
		{\small  Zejun Huang, Jiahui Liu, Chenxi Yang \footnote{  Email: zejunhuang@szu.edu.cn (Huang), mathjiahui@163.com (Liu), 3257812683@qq.com (Yang)}}\\[3mm]
		{\small   School of Mathematical Sciences, Shenzhen University, Shenzhen 518060, China }\\
		
	}
\end{center}
\begin{center}
\begin{minipage}{140mm}
\begin{center}
{\bf Abstract}
\end{center}
{\small    Let $G$ be a connected nonregular graphs of order $n$ with maximum degree $\Delta$ that attains the maximum spectral radius.  Liu and Li (2008) \cite{LL1} proposed a conjecture stating that $G$ has a degree sequence  $(\Delta,\ldots,\Delta,\delta)$ with $\delta<\Delta$. For $\Delta=3$ and $\Delta=4$, Liu (2024) \cite{LLL} confirmed this conjecture by characterizing the structure of  such graphs. Liu also proposed a modified version of the conjecture   for fixed $\Delta$ and sufficiently large $n$, stating that the above $\delta$ satisfies
  \begin{equation*}
  \delta=\left\{
  \begin{aligned}
  &\Delta-1,&&\text{if}~ \Delta \text{ and }   n\ \text{are both  odd};\\
  & 1, &&  \text{if}~     \Delta \text{ is   odd and }   n\text{ is even};\\
  &\Delta-2,&&\text{if}~  \Delta \ \text{is  even}.
  \end{aligned}
  \right.
  \end{equation*}
 For the cases where $\Delta=n-2$ with $n\ge 5$, and $\Delta=n-3$ with $n\ge 59$, we fully characterize the structure of $G$.

{\bf Keywords:} nonregular graph;  spectral radius; degree sequence
}

\end{minipage}
\end{center}
\section{Introduction}
The graphs in this paper are simple, unless otherwise specified. For a graph $G$, we denote its  vertex set by $V(G)$ and its edge set by $E(G)$. The cardinalities of $V(G)$ and $E(G)$ are called the {\it order} and the {\it size} of $G$, respectively. The {\it degree} of a vertex $v$ in a graph  $G$, denoted  $d_G(v)$, is the number of edges incident to $v$ in $G$, with each loop counting as two edges. When the graph $G$ is clear from the context, we  may simply write $d(v)$  to denote $d_G(v)$. We define $\Delta(G)$, $\delta(G)$ and $\bar{d}(G)$ to be the maximum degree, the minimum degree and the average degree of $G$, respectively. The {\it spectral radius} of $G$, denoted $\rho(G)$, is the spectral radius of its adjacency matrix $A(G)$, which is equal to the largest eigenvalue $\lambda_1(G)$ of $G$.

 It is well known that the spectral radius of a graph $G$ is less than or equal to its maximum degree $\Delta(G)$, with equality if and only if $G$ is regular. For  nonregular graphs, an interesting long-standing problem is to determine how far the spectral radius is from the maximum degree. This problem was first investigated by Stevanovi$\rm{\acute{c}}$ \cite{D} in 2004. Let $\lambda_1(n,\Delta)$ be the maximum spectral radius among all connected nonregular graphs of order $n$ with maximum degree $\Delta$.  Stevanovi$\rm{\acute{c}}$  proved
 $$\Delta-\lambda_1(n,\Delta)>\frac{1}{2 n(n \Delta-1) \Delta^2},$$
which was later improved by Zhang \cite{Zhang}, who showed
  $$\Delta-\lambda_1(n,\Delta)>\frac{(\sqrt{\Delta}-\sqrt{\delta})^2}{nD\Delta},$$
 where $D$ is the diameter of $G$. Cioab$\rm{\check{a}}$,  Gregory and  Nikiforov \cite{CGN} proved a stronger lower bound on $\Delta-\lambda_1(n,\Delta)$:
$$\Delta-\lambda_1(n,\Delta)>\frac{n\Delta-2m}{n(D(n\Delta-2m)+1)},$$
where $m$ is the size of $G$.  They also conjectured that $\Delta-\lambda_1(n,\Delta)>1/(nD)$, which was proved by  Cioab$\rm{\check{a}}$ \cite{SMC}. Liu, Shen and Wang \cite{LSW} derived another lower bound:
 $$\Delta-\lambda_1(n,\Delta)>\frac{\Delta+1}{n(3n+2\Delta-4)},$$
 which was slightly improved by Liu, Huang and You \cite{LHY}.
In \cite{LSW}, the authors also obtained an upper bound on $\Delta-\lambda_1(n,\Delta)$, leading  to the result $$\Delta-\lambda_1(n,\Delta)=\Theta\left(\frac{\Delta}{n^2}\right).$$
Moreover, they conjectured that for each fixed $\Delta$,
 $$ \lim_{n \rightarrow \infty}\frac{n^2(\Delta-\lambda_1(n,\Delta))}{\Delta-1}=\pi^2,$$
 which was disproved by Liu \cite{LLL} and modified to be
 $$ \lim_{n \rightarrow \infty}\frac{n^2(\Delta-\lambda_1(n,\Delta))}{\Delta-1}=\frac{\pi^2}{4}\quad \text{for an odd number } \Delta\ge 3$$
 and
$$ \lim_{n \rightarrow \infty}\frac{n^2(\Delta-\lambda_1(n,\Delta))}{\Delta-2}=\frac{\pi^2}{2}\quad \text{for an even number } \Delta\ge 3.$$
For more related  results on this topic, readers may refer to \cite{AS,CH,FZ, XL, WZ} and the references therein.

Another direction in studying the above problem is to investigate the degree sequence or the exact structures of the extremal graphs. Let $\mathcal{G}(n,\Delta)$ denote the set of connected nonregular graphs of order $n$ with maximum degree $\Delta$ that attain the  maximum spectral radius. Liu and Li \cite{LL1} posed the following conjecture:
\begin{conj}\cite{LL1}\label{conj1}
  Suppose $3\le\Delta\le n-2$ and $G\in\mathcal{G}(n,\Delta)$. Then the degree sequence of $G$ is $(\Delta,\dots,\Delta,\delta)$, where
  \begin{equation}
  \nonumber
  \delta=\left\{
  \begin{aligned}
  &\Delta-1,&&if~ n\Delta \ is \ odd; \\
  &\Delta-2,&&if~ n\Delta \ is \ even. \\
  \end{aligned}
  \right.
  \end{equation}
\end{conj}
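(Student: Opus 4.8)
The plan is to combine a parity count, which fixes the value of $\delta$ once the shape of the degree sequence is known, with Perron-vector edge manipulations, which force that shape. Fix $G\in\mathcal{G}(n,\Delta)$ with $\rho=\rho(G)=\lambda_1(n,\Delta)$, let $x>0$ be the unit Perron eigenvector of $A(G)$, and recall the identity $\rho=2\sum_{uv\in E(G)}x_ux_v$ together with the variational bound $\rho(H)\ge x^{\top}A(H)x$ for any graph $H$ on the same vertex set. The statement splits into (i) showing $G$ has a unique vertex of degree below $\Delta$, and (ii) identifying its degree.

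Part (ii) is elementary once (i) is in hand. If the degree sequence is $(\Delta,\dots,\Delta,\delta)$, the handshake lemma gives $(n-1)\Delta+\delta\equiv 0\pmod 2$, i.e. $\delta\equiv(n-1)\Delta\pmod 2$. When $n\Delta$ is odd, both $n,\Delta$ are odd, so $(n-1)\Delta$ is even and $\delta$ must be even; as $\Delta-1$ is then the largest even value below $\Delta$, we get $\delta=\Delta-1$ (note that for odd $n\Delta$ no $\Delta$-regular graph exists, so nonregularity is automatic). When $n\Delta$ is even the same congruence excludes $\Delta-1$ and leaves $\delta=\Delta-2$ as the largest admissible deficiency. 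This reproduces exactly the two cases of the conjecture.

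For part (i) I would exploit edge-maximality. Since $G$ is connected, enlarging its edge set strictly increases $\rho$, so no edge may be added while staying in $\mathcal{G}(n,\Delta)$. Hence for any two non-adjacent vertices $u,v$ of degree below $\Delta$, the admissible graph $G+uv$ must fail nonregularity; this forces the deficient set $S=\{v:d(v)<\Delta\}$ either to induce a clique or to consist of two vertices of degree $\Delta-1$ whose join is $\Delta$-regular. To collapse a deficient clique to a single vertex I would use a Kelmans-type rotation: pick deficient $u,v$ with $x_u\ge x_v$ and a vertex $w\in N(v)\setminus N(u)$, and pass to $G'=G-vw+uw$; this lowers $d(v)$, raises $d(u)$, and by the variational bound $\rho(G')-\rho(G)\ge 2x_w(x_u-x_v)\ge 0$. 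Iterating concentrates all deficiency on one vertex without decreasing $\rho$.

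The step I expect to be the main obstacle is making these rotations simultaneously legal (preserving connectivity and the degree cap $\Delta$) and strictly profitable, and, in the even case, breaking the tie between the surviving candidates $(\Delta,\dots,\Delta,\Delta-2)$ and $(\Delta,\dots,\Delta,\Delta-1,\Delta-1)$, which have equal size and so cannot be separated by edge count alone. Here the natural tool is that the former sequence majorizes the latter, and one expects the majorizing sequence to support the larger spectral radius in the spirit of Brualdi--Hoffman; turning this expectation into a strict inequality uniformly in $n$ and $\Delta$ is delicate. This is precisely why the full conjecture remains open and why, in this paper, the argument is carried out only in the dense regimes $\Delta=n-2$ and $\Delta=n-3$: there the complement $\overline{G}$ has every non-deficient vertex of degree $1$ or $2$, so $\overline{G}$ is nearly a matching or a union of paths and cycles and admits only finitely many candidate shapes, on which the rotations and the Perron-vector comparisons can be verified explicitly, yielding the claimed characterization for $n\ge5$ and $n\ge 59$ respectively.
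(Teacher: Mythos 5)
The statement you are proving is Conjecture \ref{conj1}, which the paper does not prove: it is quoted from Liu and Li (2008) as an open conjecture, confirmed elsewhere only for $\Delta\in\{3,4\}$. In fact the paper's own results refute it as stated. Theorem \ref{theo7} shows that for even $n\ge 8$ the graph $G(n,2)$, whose deficient vertex has degree $2\ne\Delta-2=n-4$, lies in $\mathcal{G}(n,n-2)$; and Theorem \ref{theo8} shows that for even $n\ge 60$ the unique extremal graph $H_1(n)$ in $\mathcal{G}(n,n-3)$ has $\delta=1$, not $\Delta-2=n-5$. This is precisely why Liu replaced Conjecture \ref{conj1} by the modified Conjecture \ref{conj2}, which predicts $\delta=1$ when $\Delta$ is odd and $n$ is even. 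So no proof of the statement as given can succeed, and there is no proof in the paper to compare yours against.

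The fatal gap in your argument sits in what you call the elementary part (ii). The handshake lemma only pins down $\delta$ modulo $2$; your conclusion that $\delta$ must then be the \emph{largest} value of the admissible parity below $\Delta$ is asserted, not proved, and it is false: $H_1(n)$ realizes the smallest admissible value $\delta=1$ and beats every competitor with $\delta=n-5$. Relatedly, the Kelmans rotation in part (i) (replace $vw$ by $uw$ when $x_u\ge x_v$) does show $\rho$ cannot decrease, but iterating it concentrates deficiency on a single vertex of \emph{low} degree rather than driving $\delta$ up to its parity-maximal value, and it can disconnect the graph or produce a regular one; nothing in the proposal controls this, as you partly acknowledge. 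The paper sidesteps all of this by working only with $\Delta=n-2$ and $n-3$, where $|S|\le 2$ follows from Lemmas \ref{lemm2} and \ref{lemm4}, the complement of $G-S$ decomposes into finitely many candidate shapes (matchings, paths, cycles), and these are compared through equitable quotient matrices and explicit characteristic-polynomial estimates --- a comparison that, for even $n$, selects the small-$\delta$ graph and thereby contradicts the very statement you set out to prove.
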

For $\Delta=3,4$, Liu \cite{LLL} confirmed Conjecture \ref{conj1} and determined the exact structure of the extremal graphs in $\mathcal{G}(n,\Delta)$.   Liu also modified Conjecture \ref{conj1} as follows:
\begin{conj}\cite{LLL}\label{conj2}
  Suppose $\Delta\ge3$ and $G\in\mathcal{G}(n,\Delta)$. Then for each fixed $\Delta$ and sufficiently large $n$, the degree sequence of $G$ is $(\Delta,\dots,\Delta,\delta)$, where
 \begin{equation*}
  \delta=\left\{
  \begin{aligned}
  &\Delta-1,&&\text{if}~ \Delta \text{ and }   n\ \text{are both  odd};\\
  & 1, &&  \text{if}~     \Delta \text{ is   odd and }   n\text{ is even};\\
  &\Delta-2,&&\text{if}~  \Delta \ \text{is  even}.
  \end{aligned}
  \right.
  \end{equation*}
\end{conj}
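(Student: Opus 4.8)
The plan is to fix $G \in \mathcal{G}(n,\Delta)$ with Perron eigenvector $\mathbf{x}>0$ normalized so that $\max_i x_i = 1$, and to lean on three standard devices throughout: (i) adjoining an edge to a connected graph strictly increases $\rho$; (ii) a rotation/shifting inequality — if $uw\in E(G)$, $uz\notin E(G)$ and $x_z\ge x_w$, then replacing $uw$ by $uz$ does not decrease $\rho$, strictly so when $x_z>x_w$ — which lets me relocate edges toward the large-eigenvector region while controlling degrees; and (iii) the handshake identity, which forces $(n-1)\Delta+\delta$ to be even once the conjectured degree sequence is reached. Writing $R=\{v: d_G(v)<\Delta\}$ for the set of deficient vertices, the two goals are to prove $|R|=1$ and then to pin down $\delta=d_G(v)$ for the unique $v\in R$.

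First I would establish defect localization, $|R|=1$. If two deficient vertices $u,v$ are non-adjacent, then $G+uv$ is connected with maximum degree $\Delta$; by (i) it has strictly larger spectral radius, and it remains nonregular unless $R=\{u,v\}$ with $d(u)=d(v)=\Delta-1$. Hence, apart from that single exceptional configuration, every pair of deficient vertices is adjacent, so $R$ induces a clique; since each vertex of $R$ has degree at most $\Delta-1$ while $n$ is large, this already bounds $|R|$. The remaining work is to collapse a deficient clique, and the exceptional two-vertex configuration, down to one vertex: using (ii) I would show that splitting a total degree deficiency into two vertices of degree $\Delta-1$, or spreading it over a clique, can always be improved by transferring the missing edges so that the entire deficiency sits at a single vertex placed where $\mathbf{x}$ is smallest. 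This \emph{concentrate-the-defect} step is the most delicate part of localization, because it must compare genuinely different graphs of equal number of edges rather than merely add an edge.

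With $|R|=1$ in hand, let $\delta$ be the degree of the unique deficient vertex $v_0$. The handshake identity fixes the parity of $\delta$: it is even when $\Delta$ is even or when $\Delta$ and $n$ are both odd, and odd when $\Delta$ is odd and $n$ is even. In the two even-parity regimes I would then force $\delta$ to be as large as parity permits, namely $\delta=\Delta-1$ (when $\Delta,n$ are odd) and $\delta=\Delta-2$ (when $\Delta$ is even): if $\delta$ were smaller, a rotation raising $d(v_0)$ by $2$ while moving the now-smaller deficiency to a vertex of smaller eigenvector weight strictly increases $\rho$, contradicting maximality. Here the coarse geometry of the extremal graph — that for fixed $\Delta$ and large $n$ it is \emph{path-like}, with $\rho=\Delta-\Theta(n^{-2})$ governed by a discrete Sturm--Liouville problem, consistent with the limiting constants $\pi^2/4$ and $\pi^2/2$ recorded above — is what guarantees a vertex of small eigenvector weight is available to absorb the relocated deficiency.

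The genuinely hard case is $\Delta$ odd with $n$ even, where parity only forces $\delta$ odd and the claim is the counterintuitive $\delta=1$ rather than the near-regular value $\delta=\Delta-2$. My plan here is a head-to-head asymptotic comparison of the two best candidate families — a single pendant attached to an otherwise $\Delta$-regular tube, versus a single vertex of degree $\Delta-2$ — computing $\Delta-\rho$ for each to second order in $1/n$ from the eigenvalue of the governing boundary-value problem, and showing the pendant, which realizes a free/Neumann-type boundary at one end, yields the strictly larger $\rho$. I expect this fine comparison, together with the rigorous proof that the extremal graph really is path-like so that such boundary analysis is legitimate, to be the main obstacle: it is exactly the step that fails for small $n$ (hence the sufficiently-large-$n$ hypothesis) and that separates Conjecture \ref{conj2} from the elementary parity bookkeeping. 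This also clarifies why the complementary dense regimes $\Delta=n-2$ and $\Delta=n-3$ are handled separately, through the sparse complement $\overline{G}$, where $\rho(G)$ is controlled exactly by $n$, $\Delta$ and $\rho(\overline{G})$ and the path-like asymptotics play no role.
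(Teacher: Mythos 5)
The statement you are proving is Conjecture~\ref{conj2}, which the paper explicitly presents as an open conjecture of Liu \cite{LLL}; the paper contains no proof of it (its actual results, Theorems~\ref{theo7} and \ref{theo8}, concern the complementary dense regime $\Delta=n-2$ and $\Delta=n-3$, where the analysis runs through the sparse complement and equitable partitions, not through any path-like asymptotics). So there is no proof in the paper to compare against, and your text must be judged as a standalone argument. As such it is a research outline, not a proof, and its two load-bearing steps are precisely the open parts. First, the ``concentrate-the-defect'' step, i.e.\ $|S|=1$: what you actually establish is only that $S$ induces a clique (Lemma~\ref{lemm2}, already known), which gives $|S|\le\Delta$, and you then defer the reduction to a single vertex to an unspecified rotation argument. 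That argument cannot be the naive one: by Lemma~\ref{lemm5} the deficient vertices carry the \emph{smallest} Perron weights, so relocating edge-ends toward them changes the Rayleigh quotient by terms like $x_{v_0}(x_{w_1}+x_{w_2})-x_{w_1}x_{w_2}$, which can be negative; this is exactly why Conjecture~\ref{conj1} is confirmed only for $\Delta=3,4$ and why that confirmation takes Liu a long structural analysis rather than a local switch.

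Second, your mechanism for pinning down $\delta$ is internally inconsistent. You propose that in the two ``even-parity'' regimes a rotation raising $d(v_0)$ by $2$ (while dumping the deficiency on a low-weight vertex) strictly increases $\rho$, forcing $\delta$ to be as large as parity allows. If that mechanism were sound it would apply verbatim when $\Delta$ is odd and $n$ is even, yielding $\delta=\Delta-2$ there as well --- contradicting the conjectured $\delta=1$. So the mechanism must fail in at least one parity class, and you give no criterion for when it is valid; in addition, raising $d(v_0)$ by $2$ changes the edge count, so it is not a switching at all and needs a separate justification that the modified graph stays in the admissible class with larger $\rho$. Finally, the entire quantitative part (the graph is ``path-like'', $\rho=\Delta-\Theta(n^{-2})$ governed by a Sturm--Liouville problem, Neumann versus Dirichlet boundary comparison) is asserted rather than derived; establishing that structure rigorously is the main content of \cite{LLL} even in the cases $\Delta=3,4$, and without it the boundary-value comparison that is supposed to settle the hard case $\delta=1$ has no foundation. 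In short, the proposal correctly identifies where the difficulties lie but supplies no argument for any of them.
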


In this paper, we study the structures of graphs in $\mathcal{G}(n,\Delta)$ when $\Delta$ is close to $n$. To state our main results, we introduce the following notations.

 Two graphs $ G$ and $ H $ are said to be \textit{isomorphic}, denoted $ G \cong H $, if there exists a bijection $\phi : V(G) \to V(H)$ such that for all $u, v \in V(G)$, $uv \in E(G) $ if and only if $\phi(u)\phi(v) \in E(H)$.

Given a vertex set $S\subseteq V(G)$, we denote by $G[S]$ the subgraph of $G$ induced by $S$. A {\it $k$-matching}, denoted $M_k$, is the disjoint union of $k$ edges.
The graph  $K_t-M_{t/2}$ is a graph obtained from the complete graph $K_t$ by deleting the edges of a matching $M_{t/2}$, where $t$ is even.

Let $G(n,t)$ be a graph of order $n$ constructed from the disjoint union of a vertex $u$,  $K_t-M_{t/2}$ and $K_{n-t-1}$, by adding all possible edges between $u$ and $K_t-M_{t/2}$, as well as all possible edges between $K_t-M_{t/2}$ and $K_{n-t-1}$.

Let $H_1(n)$ be a graph of order $n$, whose vertex set  can be partitioned as $V=\{u,v\}\cup V_1 \cup V_2$ such that:
\begin{itemize}
\item $uv\in E(H_1(n))$,
\item \vskip -0.3cm $H_1(n)[V_1]=K_{n-4}-M_{({n-4})/{2}}$,
  \item \vskip -0.3cm $H_1(n)[V_2]=K_2$,
  \item \vskip -0.3cm every vertex in $V_1$ is adjacent to $v$, and
  \item \vskip -0.3cm every vertex in $V_2$ is adjacent to all vertices in $V_1$.
 \end{itemize}
 Let $H_2(n)$ be a graph of order $n$, whose vertex set can be partitioned as $V=\{u,v_1,v_2\}\cup V_1 \cup V_2\cup V_3$, where $|V_2|=|V_3|=2$, and the following conditions hold:
\begin{itemize}
\item  \vskip -0.3cm$H_2(n)[\{u,v_1,v_2\}]=K_3$,
\item \vskip -0.3cm$H_2(n)[V_2\cup V_3]= K_4$,
 \item \vskip -0.3cm$H_2(n)[V_3]=K_{n-7}-M_{({n-7})/{2}}$,
 \item \vskip -0.3cmevery vertex in $V_3$ is adjacent to all vertices in $\{v_1,v_2\}\cup V_1\cup V_2$,
 \item \vskip -0.3cm$v_1$ is adjacent to both vertices in $V_1$, and
  \item \vskip -0.3cm$v_2$ is adjacent to both vertices in $V_2$.
   \end{itemize}
   See Figure \ref{fig:2} for the above graphs $G(n,t), H_1(n)$ and $H_2(n)$.
\begin{figure}[H]
	\centering
	\begin{subfigure}{.45\textwidth}
		\centering
		\begin{tikzpicture}
	    \draw  (3,6) ellipse [x radius=1cm, y radius=0.7cm];
		\draw  (6,6) ellipse [x radius=1cm, y radius=0.7cm];
		\draw(0,6)[fill]circle[radius=1.0mm];
		\node at(3,6){$K_t-M_{\frac{t}{2}}$};
		\node at (6,6){$K_{n-t-1}$};
		\draw(0,6)--(2,6);
		\draw(4,6)--(5,6);
		\node at(0,6)[above=3pt]{$u$};	
	\node at(3,4.5)[above=3pt]{$G(n,t)$};
	
	\draw  (3,3.5) ellipse [x radius=1.5cm, y radius=0.7cm];
		\draw  (6.5,3.5) ellipse [x radius=1cm, y radius=0.7cm];
		\draw(-0.5,3.5)[fill]circle[radius=1.0mm];
        \draw(0.5,3.5)[fill]circle[radius=1.0mm];
		\node at(3,3.5){$K_{n-4}-M_{\frac{n-4}{2}}$};
		\node at (6.5,3.5){$K_2$};
		\node at(-0.5,3.5)[above=3pt]{$u$};
        \node at(0.5,3.5)[above=3pt]{$v$};
		\draw(-0.5,3.5)--(0.5,3.5);
        \draw(0.5,3.5)--(1.5,3.5);
		\draw(4.5,3.5)--(5.5,3.5); 	
		\node at (3,2.5){$H_1(n)$};
	\end{tikzpicture}
\end{subfigure}
\begin{subfigure}{.5\textwidth}
	\centering
	\vspace{-15pt}
	\begin{tikzpicture}
	    \draw(4,0) ellipse [x radius=1.5cm, y radius=0.7cm];
        \draw(4,1.5) ellipse [x radius=0.7cm, y radius=0.5cm];
        \draw(4,-1.5) ellipse [x radius=0.7cm, y radius=0.5cm];
		\draw(1.5,1)[fill]circle[radius=1.0mm];
        \draw(1.5,-1)[fill]circle[radius=1.0mm];
        \draw(0,0)[fill]circle[radius=1.0mm];
        \node at(4,0){$K_{n-7}-M_{\frac{n-7}{2}}$};
        \node at(4,1.5){$K_2$};
        \node at(4,-1.5){$K_2$};
        \node at(0,0)[below=2pt]{$u$};
        \node at(1.5,1)[left=2pt]{$v_1$};
        \node at(1.5,-1)[left=2pt]{$v_2$};
        \draw (0,0)--(1.5,1);
        \draw (0,0)--(1.5,-1);
        \draw (1.5,1)--(1.5,-1);
        \draw (4,1)--(4,0.7);
        \draw (4,-1)--(4,-0.7);
        \draw (1.5,1)--(3.3,1.5);
        \draw (1.5,-1)--(3.3,-1.5);
        \draw (1.5,1)--(2.5,0);
        \draw (1.5,-1)--(2.5,0);
        \draw (4.7,-1.5) arc(-90:90:1.5);
	\end{tikzpicture}
	\caption*{$H_2(n)$}
\end{subfigure}
	\caption{$G(n,t)$, $H_1(n)$, $H_2(n)$}
	\label{fig:2}
\end{figure}
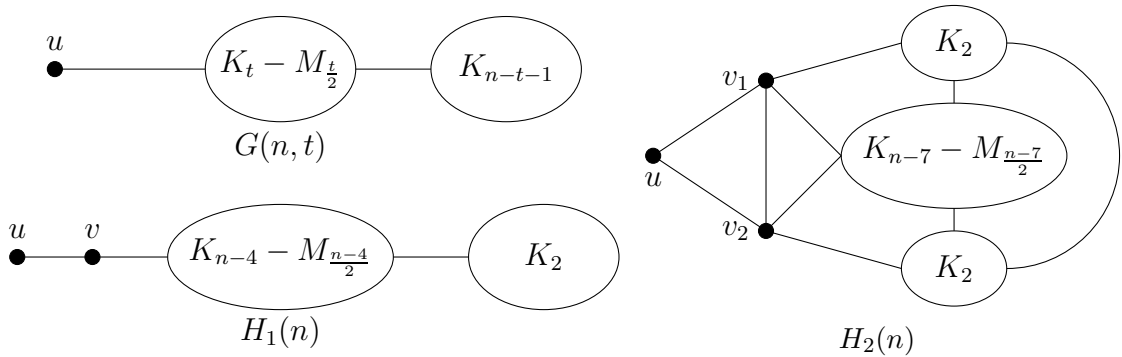

\noindent Our main results states as follows.

\begin{theo}\label{theo7}
    Let $n\ge 5$ be an integer. Then  $G\in \mathcal{G}(n,n-2)$ if and only if
    \begin{equation}
    \nonumber
   G\cong \left\{
    \begin{aligned}
    & G(n,n-3),&&      if~  n\ is\ odd;\\
    &G(n,n-4)~ or\ G(n,2),&& if~  n\ is\ even.
    \end{aligned}
    \right.
    \end{equation}
\end{theo}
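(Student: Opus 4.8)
The plan is to pass to the complement and reduce the problem to a one‑parameter family. Write $\overline{G}$ for the complement of $G$; since $\Delta(G)=n-2$, every vertex has at least one non‑neighbour, so $\delta(\overline{G})\ge 1$, i.e. $\overline{G}$ has no isolated vertex. Call a vertex $v$ \emph{deficient} if $d_G(v)<n-2$, equivalently $d_{\overline{G}}(v)\ge 2$, and let $D$ be the set of deficient vertices; nonregularity forces $|D|\ge1$. The heart of the argument is to show that an extremal $G$ has $|D|=1$. Granting this, the unique deficient vertex $u$ has some complement‑degree $k$; its $k$ complement‑neighbours are non‑deficient, hence have complement‑degree $1$ and are only non‑adjacent to $u$, while the remaining $n-1-k$ vertices also have complement‑degree $1$ and are therefore perfectly matched in $\overline{G}$. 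Unwinding this shows $\overline{G}\cong K_{1,k}\cup M_{(n-1-k)/2}$, which is exactly $\overline{G(n,t)}$ with $t=n-1-k=d_G(u)$ even. Thus $G\cong G(n,t)$ for some even $t$ with $2\le t\le n-3$, and it remains only to optimise over $t$.

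To prove $|D|=1$ I would use two local moves on an extremal $G$ with Perron vector $x>0$ and $\rho=\rho(G)$. Both rely on a rigidity observation: for $G'=G-dw+zw$ with $x_z\ge x_d$ one has $x^{\top}A(G')x-\rho\|x\|^2=2x_w(x_z-x_d)\ge0$, so $\rho(G')\ge\rho$; and equality would make $x$ the Perron vector of $G'$, forcing $(A(G')x)_d=\rho x_d-x_w=\rho x_d$, i.e. $x_w=0$, impossible. Hence any admissible rotation toward a vertex of no‑smaller Perron weight \emph{strictly} increases $\rho$, as does adding any edge. Now let $z\in D$ have maximum Perron weight. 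If some complement‑neighbour $w$ of $z$ is non‑deficient (so $d_{\overline{G}}(w)=1$), then for any other $d\in D$ we have $dw\in E(G)$ and $zw\notin E(G)$, so the rotation $dw\to zw$ is available; only $z,d$ change degree and $d$ stays deficient, so the graph stays in $\mathcal{G}(n,n-2)$ while $\rho$ strictly increases, a contradiction. Otherwise every complement‑neighbour of $z$ is deficient, and $z$ together with such a neighbour $w_1$ is a pair of complement‑adjacent deficient vertices; adding the edge $zw_1$ to $G$ keeps the graph nonregular (a second deficient neighbour $w_2$ of $z$ survives untouched) and strictly raises $\rho$, again a contradiction. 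Either way $|D|\le1$, hence $|D|=1$.

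With $G\cong G(n,t)$ the optimisation is clean. The partition $\{u\}\cup V(K_t-M_{t/2})\cup V(K_{n-t-1})$ is equitable, so $\rho(G(n,t))$ is the Perron root of the quotient matrix $Q=\begin{pmatrix}0&t&0\\1&t-2&n-t-1\\0&t&n-t-2\end{pmatrix}$, whose characteristic polynomial I would compute to be $P(\lambda)=\lambda^3+(4-n)\lambda^2+(4-2n)\lambda+t(n-2-t)$. The decisive point is that $t$ enters only through the constant term $c(t)=t(n-2-t)$: writing $P(\lambda)=g(\lambda)+c(t)$ with $g$ independent of $t$, and using that the Perron root $\rho$ is a simple root with $P'(\rho)>0$, we get $d\rho/dc(t)=-1/P'(\rho)<0$, so $\rho$ is a strictly decreasing function of $c(t)$. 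Hence maximising $\rho$ is equivalent to minimising the downward parabola $c(t)=t(n-2-t)$, symmetric about $t=(n-2)/2$, over the admissible even $t\in[2,n-3]$. For $n$ odd the farthest admissible point is the unique $t=n-3$, giving $G\cong G(n,n-3)$; for $n$ even the endpoints $t=2$ and $t=n-4$ are equidistant from the centre with $c(2)=c(n-4)=2n-8$, so $P(\lambda)$ is literally the same polynomial and $\rho(G(n,2))=\rho(G(n,n-4))$, yielding exactly $G(n,2)$ and $G(n,n-4)$ (which coincide when $n=6$). This identity $c(2)=c(n-4)$ is precisely what explains the otherwise surprising tie between two graphs of different sizes.

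The main obstacle is not the optimisation but the bookkeeping that keeps each local move inside $\mathcal{G}(n,n-2)$. Preserving $\Delta=n-2$ and nonregularity is automatic from the choices above, but preserving \emph{connectivity} when a rotation deletes an edge at a deficient vertex of $G$‑degree $1$ needs care, since such a vertex would be isolated. I would dispose of this by first ruling out pendant‑type deficient vertices: if a deficient $d\ne z$ has $d_G(d)=1$, then either it is complement‑adjacent to another deficient vertex and one adds that edge instead of rotating, or a short direct argument applies, together with a finite check of the smallest values of $n$. The strictness lemma of the second paragraph is the lever that upgrades ``some extremal graph is $G(n,t)$'' to ``every extremal graph is $G(n,t)$,'' which is exactly what makes the characterisation an ``if and only if'' rather than a mere existence of optimisers.
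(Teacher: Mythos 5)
Your proposal is correct and, from the point where the extremal graph is identified as some $G(n,t)$ onward, it coincides with the paper's argument: your quotient matrix is the paper's $A_\delta$, your constant term $t(n-2-t)$ is exactly the paper's $-(\delta^2+2\delta-n\delta)$, and minimising the downward parabola over admissible even $t$ is the same computation as the paper's comparison $f(\delta_2,\lambda)-f(\delta_1,\lambda)=(\delta_1-\delta_2)(\delta_1+\delta_2-n+2)$ combined with Lemma \ref{lemma8} (you should, however, replace the heuristic $d\rho/dc=-1/P'(\rho)$ by that direct polynomial comparison, since $t$ is discrete and the monotonicity is cleaner stated as ``larger constant term means no root at or above the old Perron root''). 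The genuine difference is in the first half. The paper kills $|S|\ge 2$ in three lines by quoting two facts about extremal graphs: $G[S]$ is a clique (Lemma \ref{lemm2}) and $x_v\le x_u$ for $u,v\in S$ forces $N_T(v)\subseteq N_T(u)$ (Lemma \ref{lemm4}); since a vertex of degree $n-2$ misses at most one vertex, the $S$-vertex of largest Perron weight would then have degree $n-1$. You instead reprove $|D|=1$ from scratch with rotations and edge additions in the complement, which is self-contained and elementary but is precisely where your acknowledged bookkeeping lives: the rotation $dw\to zw$ can only disconnect $G$ when $d$ is a pendant vertex, and your sketched fix does close this case --- if a pendant deficient $d$ had no deficient complement-neighbour, each of its $n-2$ complement-neighbours would have $d$ as its unique complement-neighbour, forcing the single remaining vertex to have degree $n-1$ --- so the gap is fillable rather than fatal, but it must be written out. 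Net assessment: the paper's route is shorter because it leans on cited lemmas about $\mathcal{G}(n,\Delta)$; yours buys independence from those lemmas at the cost of the connectivity and pendant-vertex case analysis.
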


\begin{theo}\label{theo8}
	 Let $n\ge 59$ be an integer. Then  $G\in \mathcal{G}(n,n-3)$ if and only if $G\cong H_1(n)$ when $n$ is even and $G\cong H_2(n)$ when $n$ is odd.
\end{theo}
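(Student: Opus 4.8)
The plan is to pass to the complement $\bar G$ of $G$ and turn the problem into a clean ``center-plus-petals'' optimization. Since $\Delta(G)=n-3$, every vertex of $G$ has degree at most $n-3$, so every vertex of $\bar G$ has degree at least $2$, and since $G$ is nonregular $\bar G$ has at least one vertex of degree exceeding $2$. Using $A(G)=J-I-A(\bar G)$ I would record the Rayleigh identity
\[
\rho(G)=\max_{x\neq 0}\frac{\big(\mathbf 1^{\top}x\big)^{2}-\|x\|^{2}-x^{\top}A(\bar G)x}{\|x\|^{2}},
\]
which makes transparent that raising $\rho(G)$ amounts to deleting edges of $\bar G$ in Perron-heavy positions. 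The first phase is a degree-sequence reduction: I would show that an extremal $G$ has exactly one vertex $u$ of degree below $n-3$, equivalently that $\bar G$ has exactly one vertex $u$ of degree $>2$ and all other vertices of degree $2$. The engine is a Perron-vector edge swap: if two vertices of $\bar G$ of degree $>2$ were adjacent in $\bar G$, deleting that edge (equivalently adding the corresponding edge to the connected graph $G$) strictly increases $\rho(G)$ without raising $\Delta(G)$ above $n-3$; the remaining configurations, in which the high-degree vertices of $\bar G$ are mutually nonadjacent, are removed by analogous but more delicate rotations. This phase is monotonicity-plus-bookkeeping, but it is where most of the case analysis lives.

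Once $\bar G$ has a unique vertex $u$ of degree $>2$ and every other vertex has degree exactly $2$, the graph $\bar G-u$ is a disjoint union of paths and cycles, with $u$ joined precisely to the path endpoints. Thus $\bar G$ is a center $u$ together with \emph{petals}, each petal a cycle through $u$ obtained by completing a path of $\bar G-u$, possibly alongside some detached cycles. After ruling out detached cycles, the bookkeeping is immediate: writing $\delta:=d_G(u)$, the quantity $\delta$ equals the total number of internal path vertices, so $\delta\equiv n-1\pmod 2$, and connectedness of $G$ forces $\delta\ge 1$. Hence the parity-forced minimum is $\delta=1$ for even $n$ and $\delta=2$ for odd $n$; a triangle petal consumes two new vertices and contributes $0$ to $\delta$, a four-cycle petal consumes three new vertices and contributes $1$, and a longer petal consumes more vertices while contributing proportionally more to $\delta$.

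The heart of the argument is a spectral comparison showing that \emph{shorter petals are better}. Concretely I would prove a ``petal-shortening'' lemma asserting that redistributions which preserve $n$ and the center $u$ while replacing longer petals by shorter ones --- for instance trading one five-cycle petal for two triangle petals, or trading two four-cycle petals for three triangle petals --- strictly increase $\rho(G)$ once $n$ is large. Iterating drives every petal down to a triangle apart from the parity/connectivity obstruction, which leaves exactly one four-cycle when $n$ is even and exactly two four-cycles when $n$ is odd, i.e. $\bar G\cong\overline{H_1(n)}$ and $\bar G\cong\overline{H_2(n)}$ respectively. For even $n$ the surviving configuration is unique, so $G\cong H_1(n)$ follows at once; for odd $n$ there remains a single competitor with the same $\delta=2$, namely the graph whose complement is one five-cycle petal together with triangles, and defeating it is the decisive ``one five-cycle versus two four-cycles'' comparison. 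I would execute all of these comparisons by equipping each candidate with its equitable partition --- for $H_2(n)$ the cells $\{u\}$, $\{v_1,v_2\}$, $V_2\cup V_3$, $V_1$ --- and comparing the largest eigenvalues of the resulting small quotient matrices, or equivalently by an explicit Perron-vector rotation between the two graphs.

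The obstacle I expect to dominate is this final quantitative comparison. All competing graphs have spectral radius within $O(1/n)$ of $n-3$, so the petal-shortening inequalities are genuinely second-order and require sharp control of the Perron entries, or of the dominant roots of the competing quotient characteristic polynomials; this is exactly where a finite threshold such as $n\ge 59$ must enter, since the inequality can fail for small $n$. The secondary difficulty is making the first phase clean: excluding a second low-degree vertex and all detached cycles by a bounded argument rather than an open-ended case split, so that the center-plus-petals picture is fully in force before the delicate comparison begins.
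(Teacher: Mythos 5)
Your global picture of $\overline{G}$ as a center plus petals is essentially the paper's own decomposition of $\overline{G-u}$ into Type (I) edges, Type (II) paths and Type (III) cycles, and your endgame for odd $n$ --- ``one five-cycle petal versus two four-cycle petals'' --- is exactly the paper's comparison of $G_2^{(1)}$ with $G_2^{(2)}\cong H_2(n)$. But the engine you propose, the petal-shortening lemma, is false. Writing $\delta=d_G(u)$, each of your trades (one $C_5$-petal for two triangles, two $C_4$-petals for three triangles) lowers $\delta$ by $2$, and $\rho$ is \emph{not} monotone in $\delta$: for $3\le\delta\le n-5$ the partition $(\{u\},N(u),V\setminus N[u])$ of the cleaned-up configuration gives a $3\times 3$ quotient matrix whose characteristic polynomial has constant term $-\delta^2+(n-3)\delta=\delta(n-3-\delta)$, so up to an $O(1/n^2)$ error (which the paper controls via surgery on $G^{loop}$) the largest root \emph{decreases} as $\delta(n-3-\delta)$ increases. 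Hence the best configuration in that range is $\delta=n-5$ --- a center with two $C_4$-petals and a sea of detached cycles --- and applying your ``two $C_4$'s for three triangles'' move to it strictly \emph{decreases} the spectral radius, by a gap of order $1/n$ that swamps the $O(1/n^2)$ configuration error. Iterated shortening therefore does not converge to the extremal graph; it moves away from the strongest competitor.

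This is also why ``after ruling out detached cycles'' cannot be a throwaway clause: the detached-cycle-heavy graph with $\delta=n-5$ is the genuine runner-up, and defeating it is the hardest step of the paper's proof --- one must show that all four real roots of its quartic quotient polynomial $g(\lambda)$ lie below $t_1=n-3-\frac{2}{n}+\frac{4}{n^{2}}+\frac{5}{n^{3}}$ while the $\delta=1,2$ polynomials $f_1,f_2$ still have a root above $t_1$, with a finite computer check for $59\le n\le 99$; this is where the threshold $n\ge 59$ actually enters. Your proposal contains no mechanism for this comparison between the two ends of the $\delta$-range. The first phase is also thinner than you suggest: two low-degree vertices adjacent in $\overline{G}$ are already excluded by the known fact that $G[S]$ is complete, so the case needing work is two low-degree vertices adjacent in $G$, and the paper spends all of its Case 2 (three bespoke edge rotations plus the Perron-entry ratio bound $M/m<1+\frac{1}{\lambda-1}+\frac{1}{(\lambda-1)^{2}}$) eliminating it; ``analogous but more delicate rotations'' is not yet an argument there.
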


Theorem \ref{theo8} shows that for $\Delta=n-3$, the  connected graph of order $n$ with maximum degree $\Delta$ attaining maximum spectral radius is unique and it has a degree sequence $(\Delta,\ldots,\Delta,\delta)$, where $\delta$ is as small as possible.

We give some preliminaries   in Section 2 and  present the proofs in Section 3 and Section 4.

\section{Preliminary}
In this section, we introduce some notations and preliminary lemmas that will be used throughout the paper.

Given a graph $G=(V,E)$, let $S\subseteq V$.
We denote by $G-S$ the induced subgraph $G[V\setminus S]$, which is also written as $G-v$ when $S=\{v\}$ is a singleton. The {\it neighbourhood} of $S$, denoted  $N(S)$, is the set of vertices adjacent to some vertices in $S$, while the {\it closed neighbourhood} of $S$  is $N[S]:=N(S)\cup S$. We denoted by $\overline{G}$ the {\it complement} of $G$, and by $G^{loop}$, the graph obtained by adding a loop incident to each vertex of $G$. A loop incident to a vertex $v$ is denoted by $loop(v)$.

Given a graph $G=(V,E)$ that allows loops,  its adjacency matrix is defined as
 $A(G)=(a_{ij})$, where
 $$a_{ij}=\left\{\begin{array}{ll}
 2,&v_iv_j\in E \text{ and } i=j ;\\
 1,&v_iv_j\in E \text{ and } i\ne j ;\\
 0,&v_iv_j\not\in E.
 \end{array}
 \right.$$
The {\it characteristic polynomial} of a graph $G$, denoted   $P(G,\lambda)$, is the characteristic polynomial of  $A(G)$.   When $G$ is connected, by the Perron-Frobenius theorem, there is a unique positive unit eigenvector of $A(G)$ corresponding to the eigenvalue $\rho(G)$, which is called the {\it Perron vector} of $G$.

For a graph $G$ with maximum degree $\Delta$, we always denote by $$S=\{v\in V(G):d(v)<\Delta\} \quad \text {and}\quad T=\{v\in V(G):d(v)=\Delta\}.$$
Let $X=(x_1,x_2,\dots,x_n)^T$ be the Perron vector of $G$ with $x_k$   corresponding to the vertex $k\in V(G)$. We have the following nice properties on $S$ and $T$ for graphs in $\mathcal{G}(n,\Delta)$.
\begin{lem}\cite{LL2}\label{lemm2}
Let $G\in \mathcal{G}(n,\Delta)$. Then $G[S]$ is a complete graph.
\end{lem}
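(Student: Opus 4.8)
The plan is to argue by contradiction using the standard Rayleigh-quotient edge-addition technique. Suppose $G[S]$ is not complete; then there exist two distinct vertices $u,v\in S$ with $uv\notin E(G)$. Since $u,v\in S$ both have degree strictly less than $\Delta$, adding the edge $uv$ yields a graph $G'=G+uv$ whose maximum degree is still $\Delta$ (the degrees of $u$ and $v$ each rise by one to at most $\Delta$, while every other degree is unchanged), and $G'$ remains connected because adding an edge cannot disconnect a connected graph. Let $X=(x_1,\dots,x_n)^T$ be the (positive, unit) Perron vector of $G$. Then
\begin{equation*}
\rho(G')\ge X^TA(G')X=X^TA(G)X+2x_ux_v=\rho(G)+2x_ux_v>\rho(G),
\end{equation*}
since $x_u,x_v>0$. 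Thus $\rho(G')>\rho(G)$.

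The remaining point is to confirm that $G'$ lies in the admissible class, and this is the main obstacle. If $G'$ is nonregular, then it is a connected nonregular graph of order $n$ with maximum degree $\Delta$ and strictly larger spectral radius than $G$, contradicting $G\in\mathcal{G}(n,\Delta)$; so the only way the argument can fail is if $G'$ is $\Delta$-regular. This forces a very rigid structure on $G$: every vertex other than $u,v$ already has degree $\Delta$, while $d_G(u)=d_G(v)=\Delta-1$, i.e. $S=\{u,v\}$ and $G$ is a $\Delta$-regular graph with the single edge $uv$ deleted. This case is genuinely delicate: at $\Delta=n-1$ the graph $K_n-e$ realizes exactly this situation and is the true maximizer, with $S$ equal to the two (non-adjacent) endpoints of $e$, so $G[S]$ is \emph{not} complete. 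Hence the lemma requires the hypothesis $\Delta\le n-2$ implicit in this setting, and the regular case must be dispatched precisely by exploiting $\Delta\le n-2$.

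To rule out the regular case under $\Delta\le n-2$, I would use that $u$ then has at least $n-\Delta\ge 2$ non-neighbours, hence a non-neighbour $w\neq v$; since $S=\{u,v\}$, we have $d_G(w)=\Delta$. Rather than adding $uv$, one compares $G$ with graphs obtained from the regular graph $G'$ by deleting a single other edge $wz\in E(G')$ with $wz\ne uv$: writing $Y$ for the unit Perron vector of $G=G'-uv$, one gets the clean estimate
\begin{equation*}
\rho(G'-wz)\ge Y^TA(G'-wz)Y=\rho(G)+2\left(y_uy_v-y_wy_z\right),
\end{equation*}
and $G'-wz$ is nonregular with maximum degree $\Delta$, and connected for a non-bridge choice of $wz$. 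The argument then closes provided one can locate an edge $wz$ of $G'$ with $y_wy_z<y_uy_v$, or equivalently perform a local edge relocation increasing the Rayleigh quotient while restoring nonregularity. Verifying the existence of such an edge, by comparing the Perron weights at the deficient vertices $u,v$ against those across the edges of $G'$, is exactly the crux and is where the bulk of the technical, structure-dependent work lies.
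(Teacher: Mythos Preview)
The paper does not prove this lemma; it is quoted from \cite{LL2} without argument, so there is no in-paper proof to compare your attempt against. Your first step --- add the missing edge $uv$ and invoke the Rayleigh quotient --- is the right opening move, and you correctly identify both the unique obstruction (that $G+uv$ may be $\Delta$-regular, forcing $S=\{u,v\}$ with $d_G(u)=d_G(v)=\Delta-1$) and the necessity of the hypothesis $\Delta\le n-2$.

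The genuine gap is that your proposed handling of the regular case is a dead end, not merely ``where the bulk of the technical work lies''. You seek an edge $wz\in E(G')$, $wz\ne uv$, with $y_wy_z<y_uy_v$, where $Y$ is the Perron vector of $G$. But by Lemma~\ref{lemm5}, applied to the putative extremal $G$, one has $y_s<y_t$ for every $s\in S$ and $t\in T$; since $S=\{u,v\}$, the entries $y_u$ and $y_v$ are the two strictly smallest coordinates of $Y$. Any edge $wz\ne uv$ of $G'$ has at least one endpoint in $T$, and in every case $y_wy_z>y_uy_v$ (if $w,z\in T$ this is immediate; if, say, $w=u$ and $z\in T$ then $y_uy_z>y_uy_v$ because $y_z>y_v$). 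So the edge you require does not exist, and your Rayleigh bound $\rho(G'-wz)\ge\rho(G)+2(y_uy_v-y_wy_z)$ only yields a lower bound strictly below $\rho(G)$, which is vacuous. The regular case therefore needs a genuinely different mechanism --- not a single-edge deletion from $G'$ compared via the Perron vector of $G$ --- and your sketch does not supply one.
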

\begin{lem}\label{lemm4}\cite{LLL}
Let $G\in\mathcal{G}(n,\Delta)$ and $u,v\in S$. Then $x_v\leq x_u$ if and only if $N_T(v)\subseteq N_T(u)$.
\end{lem}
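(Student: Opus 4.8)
The plan is to couple the eigen-equations of the Perron vector with an edge-rotation argument that uses the extremality of $G$. Write $\rho=\rho(G)$ and let $X=(x_1,\dots,x_n)^T$ be the (positive, unit) Perron vector, so $\rho x_k=\sum_{j\sim k}x_j$ for every vertex $k$. Since $u,v\in S$ (and we may assume $u\ne v$, the case $u=v$ being trivial), Lemma~\ref{lemm2} says $G[S]$ is complete, so each of $u,v$ is adjacent to every other vertex of $S$; that is, $N(u)\cap S=S\setminus\{u\}$ and $N(v)\cap S=S\setminus\{v\}$. I would write the eigen-equations for $u$ and $v$, split each neighbourhood into its $S$-part and its $T$-part, and subtract. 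The common $S$-contributions cancel except for the two diagonal terms, yielding the clean identity
\begin{equation*}
(\rho+1)(x_v-x_u)=\sum_{j\in N_T(v)}x_j-\sum_{j\in N_T(u)}x_j .
\end{equation*}
Because $\rho+1>0$, the sign of $x_v-x_u$ is exactly the sign of the difference of the two $T$-sums. From this, the ``if'' direction is immediate: if $N_T(v)\subseteq N_T(u)$ then, since all entries of $X$ are positive, the right-hand side is nonpositive, forcing $x_v\le x_u$.

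For the ``only if'' direction I would prove the contrapositive, namely that $N_T(v)\not\subseteq N_T(u)$ forces $x_v>x_u$. So fix a vertex $w\in N_T(v)\setminus N_T(u)$ and suppose, for contradiction, that $x_v\le x_u$. Form the rotated graph $G'=G-vw+uw$. Since $w\in T$ and $w\notin N_T(u)$, the edge $uw$ is genuinely new, so $G'$ is simple; the degree of $w$ is unchanged and stays $\Delta$, the degree of $v$ drops and stays below $\Delta$, and the degree of $u$ rises by one but cannot exceed $\Delta$. Hence $G'$ is again a nonregular graph of order $n$ with maximum degree $\Delta$. I would also check that connectivity survives: if $vw$ is not a bridge this is clear, while if $vw$ is a bridge then $u$ lies in the same component of $G-vw$ as $v$ (they are adjacent in the complete graph $G[S]$), so adding $uw$ reconnects the two pieces. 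Thus $G'$ is an admissible competitor, and the extremality of $G$ gives $\rho(G')\le\rho(G)=\rho$.

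On the other hand, the Rayleigh quotient of $X$ on $A(G')$ equals $\rho+2x_w(x_u-x_v)$, using that $A(G')$ is obtained from $A(G)$ by deleting $vw$ and inserting $uw$ and that $X^TX=1$. Since $x_w>0$ and $x_u\ge x_v$, this is at least $\rho$, so $\rho(G')\ge\rho$; combined with the previous paragraph, $\rho(G')=\rho$ and $x_u=x_v$. But then $X$ attains the maximum of the Rayleigh quotient of $A(G')$, so $X$ must be a top eigenvector, $A(G')X=\rho X$. Inspecting the $v$-th coordinate of $A(G')X$, however, gives $\rho x_v-x_w\ne\rho x_v$ because $x_w>0$, a contradiction. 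Hence $x_v>x_u$, which completes the equivalence.

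The main obstacle, I expect, is squeezing a strict conclusion out of the boundary case $x_u=x_v$, where the first-order Rayleigh gain vanishes: there the contradiction cannot come from a numerical increase of the quotient but must instead come from the observation that $X$ fails to be an eigenvector of $A(G')$. The only other delicate point is verifying that the rotated graph genuinely stays in the admissible class, in particular that connectivity is preserved under the edge deletion, which is handled by the bridge observation above. Everything else is routine bookkeeping with the eigen-equations.
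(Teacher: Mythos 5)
Your proof is correct; note that the paper itself states this lemma without proof, citing \cite{LLL}, so there is no in-paper argument to compare against. Your route --- the eigen-equation identity $(\rho+1)(x_v-x_u)=\sum_{j\in N_T(v)}x_j-\sum_{j\in N_T(u)}x_j$ for the ``if'' direction, and the single-edge rotation $G-vw+uw$ with a Rayleigh-quotient/extremality contradiction (including the eigenvector argument in the boundary case $x_u=x_v$) for the ``only if'' direction --- is the standard argument for this lemma and is consistent in spirit with the switching technique of Lemma~\ref{lemm3} used elsewhere in the paper; all the delicate points (simplicity, degree bounds, connectivity via the bridge case, and strictness at $x_u=x_v$) are handled correctly.
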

\begin{lem}\label{lemm5}\cite{LLL}
Let $G\in\mathcal{G}(n,\Delta)$ and $s\in S,t\in T$ be two vertices. Then $x_s<x_t$.
\end{lem}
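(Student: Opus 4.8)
\section*{Proof proposal for Lemma \ref{lemm5}}

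The plan is to argue by contradiction via a local edge-rotation. Write $X=(x_1,\dots,x_n)^T$ for the Perron vector and $\rho=\rho(G)$, so that $\rho x_v=\sum_{u\sim v}x_u$ for every vertex $v$ and $\rho=X^TA(G)X$ (since $\|X\|=1$). Suppose for contradiction that some $s\in S$ and $t\in T$ satisfy $x_s\ge x_t$. The first step is to find a neighbour of $t$ that is untied to $s$. Put $A=N(t)\setminus N[s]$ and $B=N(s)\setminus N[t]$; partitioning $N(t)$ and $N(s)$ according to their common neighbours and the possible edge $st$ yields $d(t)-d(s)=|A|-|B|$, so $|A|\ge d(t)-d(s)=\Delta-d(s)\ge 1$. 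Hence there is a vertex $w$ with $w\sim t$, $w\ne s$ and $w\not\sim s$.

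Next I would perform the rotation $G'=G-tw+sw$; this is a simple graph because $sw\notin E(G)$. Since $x_w>0$,
\begin{equation*}
X^TA(G')X=X^TA(G)X-2x_tx_w+2x_sx_w=\rho+2x_w(x_s-x_t)\ge\rho,
\end{equation*}
with strict inequality when $x_s>x_t$. Assuming for the moment that $G'$ again belongs to the class of connected nonregular graphs of order $n$ with maximum degree exactly $\Delta$, the extremality of $G$ gives $\rho(G')\le\rho$, so in the strict case $\rho(G')\ge X^TA(G')X>\rho$ is a contradiction. In the equality case $x_s=x_t$ one obtains $\rho(G')=\rho=X^TA(G')X$, which forces the unit nonnegative vector $X$ to be the Perron vector of the connected graph $G'$ as well, i.e. $A(G')X=\rho X$. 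Comparing the eigenequation at $s$ for $G$ and for $G'$, where $N_{G'}(s)=N_G(s)\cup\{w\}$, the two left-hand sides equal $\rho x_s$ while the right-hand sides differ by $x_w$; thus $x_w=0$, contradicting $x_w>0$. So in either case we are done, once $G'$ is admissible.

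Verifying the admissibility of $G'$ is the crux, and I expect the connectivity to be the main obstacle. Maximum degree and nonregularity are straightforward: only $d(s)$ and $d(t)$ change (by $+1$ and $-1$), and $|T|\ge 2$, since otherwise $S=V\setminus\{t\}$ would be a clique by Lemma \ref{lemm2}, whence any neighbour of $t$ in $S$ would have degree $n-1\ge\Delta$, contradicting $s\in S$; consequently some $t'\in T\setminus\{t\}$ retains degree $\Delta$ in $G'$ while $d_{G'}(t)=\Delta-1$, so $\Delta(G')=\Delta$ and $G'$ is nonregular. For connectivity, deleting $tw$ can disconnect $G$ only if $tw$ is a bridge, and then $G'$ is disconnected only when $s$ lies on the same side as $w$. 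Here I would invoke Lemma \ref{lemm2}: as $t\in T$, the edge $tw$ is not internal to the clique $G[S]$, so all of $S$ stays in one component and the opposite side $C_t$ consists entirely of degree-$\Delta$ vertices of $T$, forming a near-regular, hence richly connected, block. One then re-selects $w$ to be a neighbour of $t$ inside $C_t$ (such a choice still lies in $N(t)\setminus N[s]$), so that $tw$ is no longer a bridge and $G'$ becomes connected. Making this bridge analysis rigorous---guaranteeing that some admissible $w\in N(t)\setminus N[s]$ always keeps $G'$ connected---is the step that I expect to demand the most care.
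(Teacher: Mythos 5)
The paper never proves this lemma: it is imported from Liu \cite{LLL} with a citation only, so there is no in-paper argument to compare against, and a self-contained proof is genuinely useful. Your rotation argument is correct in all its essential steps. The count $d(t)-d(s)=|N(t)\setminus N[s]|-|N(s)\setminus N[t]|$ does produce the pivot vertex $w$; the Rayleigh-quotient inequality $\rho(G')\ge \rho+2x_w(x_s-x_t)$ is right; the equality case is handled properly (equality forces $X$ to be the Perron vector of the connected graph $G'$, and comparing the eigenequation at $s$ yields $x_w=0$, impossible); and your derivation of $|T|\ge 2$ from Lemma \ref{lemm2} correctly guarantees $\Delta(G')=\Delta$ and nonregularity of $G'$, since only $d(s)$ and $d(t)$ change under the rotation.

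The one spot that needs repair is exactly the connectivity step you flagged, and your stated justification there is not correct as written. If $tw$ is a bridge with $s$ in the $w$-side $C_w$, you re-select $w'\in N(t)\cap C_t$ (this needs $\Delta\ge 2$, which is automatic since $\mathcal{G}(n,\Delta)$ is nonempty) and claim that $tw'$ ``is no longer a bridge''; nothing forces that. In $G-tw$ the component $C_t$ has all degrees equal to $\Delta$ except $d(t)=\Delta-1$, and parity does not forbid a further bridge at $t$ when $\Delta$ is odd, so ``near-regular, hence richly connected'' is not an argument. But the re-selection works anyway, for a cleaner reason: a path from $s$ to $t$ runs inside $C_w$ and then across the bridge $wt$, hence avoids the edge $tw'$; so in $G-tw'$ the vertices $s$ and $t$ lie in the same component, and $G'=G-tw'+sw'$ is connected \emph{whether or not} $tw'$ is a bridge. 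Note also that $w'\in N(t)\setminus N[s]$ holds automatically, since $tw$ is the only edge between $C_t$ and $C_w$, so $sw'\notin E(G)$ and $G'$ is simple. With this observation substituted for your ``no longer a bridge'' claim, every case is closed and the proof is complete. (A cosmetic point: your set $A=N(t)\setminus N[s]$ collides with the adjacency-matrix notation $A(G)$ used in the same paragraph; rename it.)
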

\begin{lem}\cite{CR}\label{lemm3}
Let $G$ be a connected graph with $V(G)=[n]$. Suppose $s,t,u,v\in V(G)$ are distinct vertices with  $uv,st\in E(G)$ and $sv,tu \notin E(G)$.
The local switching $LS(G;s,t,v,u)$ of $G$ is the operation that replaces the edges $uv$ and $st$ in $G$ with the edges $sv$ and $tu$. The resulting  graph is denoted by $G'=LS(G;s,t,v,u)$.
If $(x_s-x_u)(x_v-x_t)\geq 0$, then $\rho(G')\geq \rho(G)$, with equality if and only if $x_s=x_u$ and $x_v=x_t$.
\end{lem}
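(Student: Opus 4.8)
The plan is to run the standard Rayleigh-quotient comparison, exploiting that $X$ is the unit Perron vector of $G$. Since $A(G')$ is a nonnegative symmetric matrix, its spectral radius $\rho(G')$ equals its largest eigenvalue, and for every unit vector $Y$ one has $Y^TA(G')Y\le\rho(G')$. First I would apply this with $Y=X$ to obtain $\rho(G')\ge X^TA(G')X$. Then I would compute the change in the quadratic form directly. Writing $X^TA(H)X=2\sum_{ij\in E(H)}x_ix_j$ for a simple graph $H$, the passage from $G$ to $G'$ deletes the edges $uv,st$ and inserts the edges $sv,tu$, so
\begin{equation*}
X^TA(G')X-X^TA(G)X=2\bigl(x_sx_v+x_tx_u-x_ux_v-x_sx_t\bigr)=2(x_s-x_u)(x_v-x_t).
\end{equation*}
Because $X$ is the unit Perron vector, $X^TA(G)X=\rho(G)\,X^TX=\rho(G)$, and combining this with the hypothesis $(x_s-x_u)(x_v-x_t)\ge0$ yields
\begin{equation*}
\rho(G')\ge X^TA(G')X=\rho(G)+2(x_s-x_u)(x_v-x_t)\ge\rho(G),
\end{equation*}
which is the claimed inequality.

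For the equality case, the key observation is that one should read off the componentwise eigen-equation for $G'$ rather than merely use that the product is zero. Only the four vertices $s,t,u,v$ have their neighbourhoods altered, and a direct check against $A(G)X=\rho(G)X$ gives
\begin{align*}
(A(G')X)_v&=\rho(G)x_v+(x_s-x_u), & (A(G')X)_t&=\rho(G)x_t-(x_s-x_u),\\
(A(G')X)_s&=\rho(G)x_s+(x_v-x_t), & (A(G')X)_u&=\rho(G)x_u-(x_v-x_t),
\end{align*}
while $(A(G')X)_i=\rho(G)x_i$ for every remaining $i$. If $x_s=x_u$ and $x_v=x_t$, all four corrections vanish, so $A(G')X=\rho(G)X$ with $X>0$; by Perron--Frobenius a nonnegative matrix possessing a positive eigenvector has that eigenvalue as its spectral radius, whence $\rho(G')=\rho(G)$. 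Conversely, if $\rho(G')=\rho(G)$, then both inequalities above are equalities; the first, $\rho(G')=X^TA(G')X$, forces the unit vector $X$ to attain the Rayleigh maximum of the symmetric matrix $A(G')$, hence $X$ lies in the top eigenspace and $A(G')X=\rho(G')X=\rho(G)X$. Comparing the displayed components at $v$ (or $t$) yields $x_s=x_u$, and at $s$ (or $u$) yields $x_v=x_t$, completing the characterization.

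I expect no serious obstacle; the computation is short and the only point requiring care is the equality analysis, where it is tempting to conclude merely $(x_s-x_u)(x_v-x_t)=0$ from the second inequality. That conclusion is insufficient, since it does not separate the two factors. The correct route is to use the \emph{first} equality—that the Rayleigh maximum of $A(G')$ is attained at $X$—to force $X$ to be an eigenvector of $A(G')$, and then to extract both equalities $x_s=x_u$ and $x_v=x_t$ from the altered eigen-equations at the four affected vertices.
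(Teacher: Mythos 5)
Your proof is correct, and since the paper does not prove this lemma itself but cites it from \cite{CR}, there is no internal proof to diverge from: your argument is precisely the standard one. The Rayleigh-quotient comparison at the Perron vector gives $\rho(G')\ge X^TA(G')X=\rho(G)+2(x_s-x_u)(x_v-x_t)\ge\rho(G)$, and your equality analysis is the right one --- forcing $A(G')X=\rho(G)X$ from tightness of the Rayleigh bound and reading off $x_s=x_u$, $x_v=x_t$ from the perturbed eigen-equations at the four switched vertices, rather than stopping at the insufficient conclusion $(x_s-x_u)(x_v-x_t)=0$.
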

Suppose $A$ is a symmetric real matrix whose rows and columns are indexed by $V=\{1, \ldots, n\}$. Let $\left\{V_1, \ldots, V_m\right\}$ be a partition of $V$.  Denote by  $A_{ij}$ the submatrix of $A$ with row indices from $V_i$ and  column indices  from $V_j$. Let $b_{ij}$ represent the average row sum of $A_{ij}$. The matrix $B=\left(b_{ij}\right)_{m\times m}$ is called the {\it quotient matrix} of $A$. If the row sum of each block $A_{ij}$ is constant, then the partition is called {\it equitable}.

\begin{lem}\label{lemm6}
\cite{WH} Let $G$ be a (multi-)graph that allows loops, and let $\pi=\{V_1, \ldots, V_r\}$ be a partition of $V(G)$ with quotient matrix $B=(b_{i, j})$. Then
$$
\rho(G) \geq \rho(B),
$$
with equality if and only if the partition is equitable. If the partition $\pi$ is equitable, then each eigenvalue of $B$ is an eigenvalue of $G$.
\end{lem}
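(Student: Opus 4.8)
The plan is to realize the quotient matrix $B$ as a matrix similar to a compression of $A(G)$ onto the subspace of vectors that are constant on the blocks, and then to read all three assertions off the Courant--Fischer variational principle together with Perron--Frobenius theory. Write $A=A(G)$, which under the stated loop convention is symmetric and nonnegative, so $\rho(G)=\lambda_{\max}(A)$. Let $P\in\{0,1\}^{n\times r}$ be the characteristic matrix of $\pi$ (with $P_{vi}=1$ iff $v\in V_i$) and set $D=\operatorname{diag}(|V_1|,\dots,|V_r|)$, so that $P^{T}P=D$. By the definition of the average row sums, $B=D^{-1}P^{T}AP$. The first step is to symmetrize: putting $Q=PD^{-1/2}$ gives $Q^{T}Q=I_r$ (orthonormal columns) and
\[
\widehat B:=Q^{T}AQ=D^{-1/2}P^{T}APD^{-1/2}=D^{1/2}BD^{-1/2},
\]
so $\widehat B$ is symmetric, similar to $B$, and nonnegative (as $Q\ge 0$); hence $B$ has real spectrum and $\rho(B)=\lambda_{\max}(\widehat B)$.

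For the inequality I would apply Courant--Fischer to the orthonormal compression $\widehat B=Q^{T}AQ$: for every unit $y\in\mathbb R^{r}$ the vector $Qy$ is a unit vector in $\mathbb R^{n}$, whence
\[
\rho(B)=\lambda_{\max}(\widehat B)=\max_{\|y\|=1}(Qy)^{T}A(Qy)\le\max_{\|z\|=1}z^{T}Az=\lambda_{\max}(A)=\rho(G).
\]
This yields $\rho(G)\ge\rho(B)$ with no extra hypotheses.

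Next I would treat the equitable case, which simultaneously delivers the final assertion. The key algebraic reformulation is that $\pi$ is equitable exactly when $AP=PB$ (each block row sum of $A$ equals the corresponding entry $b_{ij}$). Granting $AP=PB$, any eigenpair $Bz=\mu z$ gives $A(Pz)=PBz=\mu(Pz)$, and since $P$ has full column rank $Pz\ne 0$; hence every eigenvalue of $B$ is an eigenvalue of $A=A(G)$, which is precisely the last sentence of the lemma. Taking $\mu=\rho(B)$ with a nonnegative Perron eigenvector $z\ge 0$ of $B$ produces a nonnegative eigenvector $Pz\ge 0$ of $A$; for connected $G$ the matrix $A$ is irreducible, so by Perron--Frobenius the only eigenvalue admitting a nonnegative eigenvector is $\lambda_{\max}(A)$, forcing $\rho(B)=\rho(G)$. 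Thus equitable $\Rightarrow$ equality.

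The step I expect to be the main obstacle is the converse, that equality forces $\pi$ to be equitable. The natural plan is to analyze the equality case of the Courant--Fischer bound: if $\rho(G)=\rho(B)$ and $y$ is a nonnegative Perron vector of $\widehat B$, then $x:=Qy$ attains the Rayleigh maximum $x^{T}Ax=\lambda_{\max}(A)$ and is therefore a top eigenvector of $A$ that is constant on each block $V_i$; for connected $G$ this $x$ is, up to scaling, the positive Perron vector of $G$. The genuine difficulty is that constancy of the Perron vector on the blocks is, in general, strictly weaker than equitability, so a coincidence at the largest eigenvalue alone does not force $AP=PB$. Closing this gap is the crux of the lemma: the plan is to bring in the remaining eigenvalues via Haemers' tight-interlacing theory, in which equality between a quotient eigenvalue and the corresponding eigenvalue of $A$ produces an eigenvector of $A$ lying in the block-constant subspace, and then to use the irreducibility of $A(G)$ together with the nonnegative Perron structure to promote the single equality at $\lambda_{\max}$ to the full equitability condition $AP=PB$. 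Executing this promotion carefully, and pinning down exactly which connectivity and nonnegativity hypotheses it requires, is where I expect essentially all the work of the lemma to reside.
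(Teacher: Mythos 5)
Your setup and the first three assertions are sound: writing $\widehat B=Q^{T}AQ=D^{1/2}BD^{-1/2}$ with $Q=PD^{-1/2}$, $Q^{T}Q=I_r$, the Rayleigh bound gives $\rho(B)\le\rho(G)$ unconditionally, and for equitable $\pi$ the identity $AP=PB$ yields both the spectrum containment and, via a nonnegative Perron vector of $B$, the equality $\rho(B)=\rho(G)$. Two remarks here: the paper offers no proof of this statement at all (it is quoted from Haemers), so there is nothing to match your argument against; and your appeal to connectivity in the equitable direction is an avoidable crutch, since the $\rho(A)$-eigenspace of a nonnegative symmetric matrix always contains a nonzero nonnegative vector, which cannot be orthogonal to the block-constant subspace $U$ (orthogonality to each block indicator would force it to vanish blockwise), and as $U$ and $U^{\perp}$ are both $A$-invariant this already puts $\rho(A)$ in the spectrum of $A|_U\cong\widehat B$.

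The genuine gap is exactly the step you flagged, and it cannot be closed: the implication ``equality $\Rightarrow$ equitable'' is false as stated, even for connected simple graphs, so no refinement of the tight-interlacing argument will deliver it from the single equality $\lambda_1(B)=\lambda_1(A)$. Your own Rayleigh analysis identifies what equality is actually equivalent to for connected $G$: if $y\ge 0$ is the Perron vector of $\widehat B$ and $\rho(B)=\rho(G)$, then $x=Qy$ attains the maximum of the Rayleigh quotient, hence is the Perron vector of $A$ and is constant on blocks; conversely a block-constant Perron vector lies in the range of $Q$ and forces equality. As you correctly suspected, block-constancy is strictly weaker than equitability, and here is a concrete counterexample: for any connected regular graph the Perron vector is the all-ones vector, so \emph{every} partition gives $\rho(B)=\rho(G)$, while most partitions are not equitable. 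E.g.\ for $C_4$ with $\pi=\bigl\{\{1\},\{2,3,4\}\bigr\}$ one computes $b_{11}=0$, $b_{12}=2$, $b_{21}=\frac{2}{3}$, $b_{22}=\frac{4}{3}$, whose characteristic polynomial $\lambda^2-\frac{4}{3}\lambda-\frac{4}{3}$ has largest root $2=\rho(C_4)$, yet $\pi$ is not equitable. Haemers' theorem concludes equitability only from \emph{tight} interlacing, i.e.\ from all $r$ eigenvalues of $B$ coinciding with extreme eigenvalues of $A$, not from equality at the top alone; the lemma as paraphrased in the paper overstates his result. Fortunately this is harmless both to you and to the paper: every application in the paper uses only the unconditional inequality and the direction you did prove (equitable implies $\rho(G)=\rho(B)$ together with spectrum containment), so your proposal establishes everything that is actually needed, and the ``only if'' clause should be deleted or replaced by the block-constant Perron vector criterion.
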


\begin{lem}
Let $A$ and $A^{loop}$ be the adjacent matrices of $G$ and  $G^{loop}$, respectively. Suppose $\pi=\{V_1, \ldots, V_r\}$ is an equitable partition of $V(G)$ with quotient matrix $B$. Then $\pi$ is also an equitable partition of $V(G^{loop})$ with quotient matrix $B^{loop}$ such that  $$\rho(A^{loop})=\rho(B^{loop})=\rho(A)+2=\rho(B)+2 .$$ Moreover, $G$ and $G^{loop}$ has the same
 Perron vector.
\end{lem}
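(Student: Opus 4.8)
The plan is to reduce everything to the single algebraic identity $A^{loop}=A+2I_n$, where $I_n$ is the identity matrix of order $n=|V(G)|$. Under the loop convention $a_{ii}=2$ fixed in this section, adding exactly one loop at each vertex of $G$ raises every diagonal entry of the adjacency matrix by $2$ and leaves all off-diagonal entries unchanged, which is precisely this identity. I expect no substantive obstacle here; the only points demanding care are the loop convention (so that each added loop contributes $2$ rather than $1$) and keeping track of which blocks of the quotient matrix are affected.

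First I would record the spectral consequences of $A^{loop}=A+2I_n$. Since the two matrices differ by a scalar multiple of the identity, they share all eigenvectors and every eigenvalue of $A$ is shifted up by $2$; in particular $\rho(A^{loop})=\rho(A)+2$. Moreover, if $X$ denotes the Perron vector of $G$, so that $AX=\rho(A)X$ with $X>0$ and $\|X\|=1$, then $A^{loop}X=(\rho(A)+2)X=\rho(A^{loop})X$, so $X$ is a positive unit eigenvector of $A^{loop}$ for its largest eigenvalue. As $G$ is connected, adding loops keeps it connected, so Perron--Frobenius applies to $G^{loop}$ and forces $X$ to be its Perron vector as well, which settles the final assertion.

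Next I would transfer the equitable partition. Writing $A^{loop}_{ij}$ for the block of $A^{loop}$ indexed by rows in $V_i$ and columns in $V_j$, the identity $A^{loop}=A+2I_n$ gives $A^{loop}_{ij}=A_{ij}$ for $i\ne j$ and $A^{loop}_{ii}=A_{ii}+2I_{|V_i|}$. Adding $2I_{|V_i|}$ increases the sum of each row of a diagonal block by exactly $2$, while every off-diagonal block is untouched; hence each block of $A^{loop}$ still has constant row sum, so $\pi$ is again equitable, now for $G^{loop}$, and its quotient matrix is $B^{loop}=B+2I_r$. Consequently $\rho(B^{loop})=\rho(B)+2$.

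Finally I would invoke Lemma~\ref{lemm6} twice. Applied to the equitable partition $\pi$ of $G$ it yields $\rho(A)=\rho(B)$, and applied to the equitable partition $\pi$ of $G^{loop}$ it yields $\rho(A^{loop})=\rho(B^{loop})$. Combining these two equalities with the shift relations $\rho(A^{loop})=\rho(A)+2$ and $\rho(B^{loop})=\rho(B)+2$ produces the chain $\rho(A^{loop})=\rho(B^{loop})=\rho(A)+2=\rho(B)+2$, which is exactly the stated equality.
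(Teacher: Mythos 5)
Your proposal is correct and follows essentially the same route as the paper: both rest on the identities $A^{loop}=A+2I$ and $B^{loop}=B+2I$, deduce the shared Perron vector from the scalar shift, and apply Lemma \ref{lemm6} to the equitable partition to chain the four spectral radii together. Your write-up simply spells out the block-by-block verification that the partition remains equitable, which the paper states as clear.
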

\begin{proof}
It is clear that $\pi$ is  an equitable partition of $V(G^{loop})$. Moreover, we have
  $$A^{loop}=A+2I \quad \text{and}\quad B^{loop}=B+2I.$$
  Therefore, $G$ and $G^{loop}$ have the same Perron vector. By Lemma \ref{lemm6},  we get
  $$\rho(A^{loop})=\rho(A)+2=\rho(B)+2=\rho(B^{loop}).$$
\end{proof}

\begin{lem}(\cite{CR})\label{lemm8}
	Let $G_1,G_2$ be two graphs, If $P(G_2,\lambda)\geq P(G_1,\lambda)$
	for all $\lambda \geq \rho(G_1)$, then $\rho(G_2)\leq \rho(G_1)$, with equality if and only if $P(G_2,\rho(G_1))= P(G_1,\rho(G_1))=0$.
\end{lem}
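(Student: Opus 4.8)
The plan is to reduce everything to two elementary facts about the characteristic polynomial of a graph. Since the adjacency matrix $A(G_i)$ is real symmetric (loops only alter diagonal entries and preserve symmetry), $P(G_i,\lambda)=\det(\lambda I-A(G_i))$ is a monic real polynomial whose roots are exactly the eigenvalues of $G_i$, all of which are real. By the Perron--Frobenius theorem $\rho(G_i)$ is the largest of these roots. Two consequences I will use repeatedly are: (i) $P(G_i,\rho(G_i))=0$; and (ii) writing $P(G_i,\lambda)=\prod_j(\lambda-\mu_j)$ with every $\mu_j\le\rho(G_i)$, each factor becomes positive once $\lambda>\rho(G_i)$, so $P(G_i,\lambda)>0$ for all $\lambda>\rho(G_i)$.

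For the main inequality I would argue by contradiction. Suppose $\rho(G_2)>\rho(G_1)$. Then $\rho(G_2)\ge\rho(G_1)$, so the hypothesis applies at $\lambda=\rho(G_2)$ and yields
$$0=P(G_2,\rho(G_2))\ge P(G_1,\rho(G_2)).$$
On the other hand, $\rho(G_2)>\rho(G_1)$ together with fact (ii) applied to $G_1$ gives $P(G_1,\rho(G_2))>0$, a contradiction. Hence $\rho(G_2)\le\rho(G_1)$.

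For the equality characterization, note first that $P(G_1,\rho(G_1))=0$ holds automatically by fact (i), so the stated condition is really just $P(G_2,\rho(G_1))=0$. If $\rho(G_2)=\rho(G_1)$ then $P(G_2,\rho(G_1))=P(G_2,\rho(G_2))=0$ by fact (i) applied to $G_2$. Conversely, if $P(G_2,\rho(G_1))=0$ then $\rho(G_1)$ is a root of $P(G_2,\cdot)$, and therefore $\rho(G_1)\le\rho(G_2)$ because $\rho(G_2)$ is the largest root; combining this with the inequality $\rho(G_2)\le\rho(G_1)$ already proved forces $\rho(G_2)=\rho(G_1)$.

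The argument is short, and I do not expect a genuine obstacle; the only points requiring care are bookkeeping ones. I must ensure the hypothesis is invoked only at a point $\lambda\ge\rho(G_1)$, which is exactly what the assumption $\rho(G_2)>\rho(G_1)$ guarantees, and I should be explicit that $\rho(G_i)$ is the \emph{largest} root, since this is what validates both the positivity in (ii) and the step ``root of $P(G_2,\cdot)$ implies $\le\rho(G_2)$'' in the converse; this is precisely where the real-symmetric/Perron--Frobenius structure enters. Notably, none of the steps above uses that $G_1$ and $G_2$ have equal order: only that each $P(G_i,\cdot)$ is monic with real roots and largest root $\rho(G_i)$, so the proof goes through verbatim in that generality.
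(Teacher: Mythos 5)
Your argument is correct and matches the paper's approach: the paper states Lemma \ref{lemm8} as a citation to \cite{CR}, but its proof of the matrix analogue (Lemma \ref{lemma8}) uses exactly your key observation that $P(G_1,\lambda)>0$ for $\lambda>\rho(G_1)$ forces $P(G_2,\lambda)>0$ there, so $P(G_2,\cdot)$ has no root exceeding $\rho(G_1)$. Your contradiction framing and the explicit remark that equal orders are not needed are only cosmetic additions to the same argument.
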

Similarly as Lemma \ref{lemma8}, we have the following.
\begin{lem}\label{lemma8}
	Let $A_1,A_2$ be  real matrices such that $\rho(A_i)$ equals   the maximum real eigenvalue of $A_i$, and $\rho(A_i)\leq \rho,~i=1,2$. If $P(A_2,\lambda)\geq P(A_1,\lambda)$
	for all $\lambda \in [\rho(A_1),\rho]$, then $\rho(A_2)\leq \rho(A_1)$, with equality if and only if $P(A_2,\rho(A_1))=P(A_1,\rho(A_1))=0$.
\end{lem}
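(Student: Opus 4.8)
The plan is to mirror the proof of Lemma~\ref{lemm8}, replacing the spectral-radius input for graphs with the hypothesis that $\rho(A_i)$ is the largest real eigenvalue of $A_i$. Throughout I read $P(A_i,\lambda)$ as the monic characteristic polynomial $\det(\lambda I-A_i)$. The single fact driving everything is the sign of $P(A_i,\lambda)$ just to the right of $\rho(A_i)$: I would first establish that $P(A_i,\rho(A_i))=0$ and $P(A_i,\lambda)>0$ for every $\lambda>\rho(A_i)$. This follows by factoring $P(A_i,\lambda)=\prod_j(\lambda-\mu_j)$ over $\mathbb{C}$; each real eigenvalue satisfies $\mu_j\le\rho(A_i)<\lambda$ and contributes a positive factor, while the non-real eigenvalues pair into complex conjugates $a\pm bi$ and contribute $(\lambda-a)^2+b^2>0$, so the product is strictly positive.

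With this in hand, I would prove $\rho(A_2)\le\rho(A_1)$ by contradiction. Suppose instead $\rho(A_2)>\rho(A_1)$. Since $\rho(A_2)\le\rho$ by hypothesis, the point $\lambda=\rho(A_2)$ lies in $(\rho(A_1),\rho]\subseteq[\rho(A_1),\rho]$, so the comparison $P(A_2,\lambda)\ge P(A_1,\lambda)$ is available there. But $P(A_2,\rho(A_2))=0$ because $\rho(A_2)$ is an eigenvalue of $A_2$, whereas $P(A_1,\rho(A_2))>0$ by the positivity fact applied to $A_1$ (using $\rho(A_2)>\rho(A_1)$). This yields $0\ge P(A_1,\rho(A_2))>0$, a contradiction; hence $\rho(A_2)\le\rho(A_1)$.

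For the equality case I would argue both directions. If $\rho(A_2)=\rho(A_1)$, then $\rho(A_1)$ is simultaneously an eigenvalue of $A_1$ and of $A_2$, so $P(A_1,\rho(A_1))=P(A_2,\rho(A_1))=0$. Conversely, if $P(A_2,\rho(A_1))=0$ (the condition $P(A_1,\rho(A_1))=0$ being automatic, as $\rho(A_1)$ is an eigenvalue of $A_1$), then $\rho(A_1)$ is a real eigenvalue of $A_2$, whence $\rho(A_2)\ge\rho(A_1)$; combined with the inequality just proved, this forces $\rho(A_2)=\rho(A_1)$.

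The argument is almost entirely formal once the positivity fact is secured, so I expect no serious obstacle. The one point that genuinely needs care is the sign convention: the whole proof rests on $P$ being the monic polynomial $\det(\lambda I-A_i)$, so that it tends to $+\infty$ and is strictly positive beyond the largest real root. If instead $P$ were taken as $\det(A_i-\lambda I)$, a parity-of-dimension sign $(-1)^n$ would intrude and the direction of every inequality would have to be tracked accordingly; I would therefore flag this normalization explicitly so that it matches the graph version in Lemma~\ref{lemm8}.
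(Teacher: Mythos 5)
Your proposal is correct and follows essentially the same route as the paper's proof: both arguments rest on the fact that the monic characteristic polynomial of a real matrix is strictly positive to the right of its largest real root, so the hypothesis $P(A_2,\lambda)\ge P(A_1,\lambda)>0$ on $(\rho(A_1),\rho]$ excludes any real eigenvalue of $A_2$ from that interval and forces $\rho(A_2)\le\rho(A_1)$. Your version merely spells out the positivity fact (via the pairing of non-real eigenvalues into conjugate quadratic factors) and the equality case more explicitly than the paper does.
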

\begin{proof}
  Since $P(A_2,\lambda)\geq P(A_1,\lambda)>0$
	for all $\lambda \in (\rho(A_1),\rho]$, the real roots of  $P(A_2,\lambda)$ are located in  $(-\infty,\rho(A_1)]$, with $\rho(A_1)$ being a root if and only if  $P(A_2,\rho(A_1))=0$. Therefore, the result follows.
\end{proof}
\begin{lem}\label{lemma9}
    Let $A_1,A_2$ be  real matrices such that $\rho(A_i)$ equals the maximum real eigenvalue of $A_i$, and $\rho(A_i)\leq \rho,~i=1,2$. If $P(A_2,\lambda-k)-P(A_1,\lambda)\ge0$ for all $\lambda\in [\rho(A_1),\rho+k]$, then $\rho(A_2)+k\le\rho(A_1)$.
\end{lem}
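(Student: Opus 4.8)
The plan is to reduce Lemma \ref{lemma9} to the already-established Lemma \ref{lemma8} by a spectral shift. I would set $A_2':=A_2+kI$. Adding $kI$ translates every eigenvalue of $A_2$ by $k$, so the maximum real eigenvalue of $A_2'$ is $\rho(A_2)+k$; in particular $A_2'$ inherits the property that its spectral radius coincides with its maximum real eigenvalue, and $\rho(A_2')=\rho(A_2)+k$. Moreover the characteristic polynomials are related by
$$P(A_2',\lambda)=\det(\lambda I-A_2-kI)=\det((\lambda-k)I-A_2)=P(A_2,\lambda-k),$$
which is exactly the shifted polynomial appearing in the hypothesis. Thus the assumption $P(A_2,\lambda-k)-P(A_1,\lambda)\ge 0$ on $[\rho(A_1),\rho+k]$ is precisely $P(A_2',\lambda)\ge P(A_1,\lambda)$ on the same interval.

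Next I would invoke Lemma \ref{lemma8} for the pair $(A_1,A_2')$, with the upper bound $\rho$ there replaced by $\rho+k$. The hypotheses are met: $\rho(A_1)\le \rho\le \rho+k$ and $\rho(A_2')=\rho(A_2)+k\le \rho+k$, so both spectral radii lie below the enlarged bound $\rho+k$, each equals the corresponding maximum real eigenvalue, and the polynomial comparison $P(A_2',\lambda)\ge P(A_1,\lambda)$ holds throughout $[\rho(A_1),\rho+k]$. Lemma \ref{lemma8} then gives $\rho(A_2')\le \rho(A_1)$, which is exactly $\rho(A_2)+k\le \rho(A_1)$, the desired conclusion. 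Equivalently, one can run the mechanism of Lemma \ref{lemma8} directly: for $\lambda>\rho(A_1)$ the value $P(A_1,\lambda)$ is strictly positive (each real factor $\lambda-\lambda_i$ is positive and each complex-conjugate pair contributes $(\lambda-a)^2+b^2>0$), so the hypothesis forces $P(A_2,\mu)>0$ for every $\mu\in(\rho(A_1)-k,\rho]$ under the substitution $\mu=\lambda-k$; since $\rho(A_2)$ is a root of $P(A_2,\cdot)$ lying in $(-\infty,\rho]$, it cannot fall in this interval, whence $\rho(A_2)\le \rho(A_1)-k$.

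The argument is short, and I do not expect a genuine obstacle; the only points needing care are bookkeeping. First, I must make sure the shift sends the \emph{maximum} real eigenvalue to the maximum real eigenvalue rather than merely permuting the spectrum, so that $\rho(A_2')=\rho(A_2)+k$ truly records the relevant root. Second, I should make explicit the standing assumption $k\ge 0$ that is implicit in writing the comparison interval as the enlargement $[\rho(A_1),\rho+k]$ of $[\rho(A_1),\rho]$: the reduction uses $\rho+k\ge\rho\ge\rho(A_1)$ to guarantee that Lemma \ref{lemma8} applies on the correct range. With these two checks in place the proof closes immediately.
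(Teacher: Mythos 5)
Your proof is correct and follows essentially the same route as the paper: both set $A_2':=A_2+kI$, observe $P(A_2',\lambda)=P(A_2,\lambda-k)$, and apply Lemma \ref{lemma8} with the bound enlarged to $\rho+k$ to conclude $\rho(A_2)+k\le\rho(A_1)$. Your added remarks on checking $k\ge 0$ and that the shift preserves the maximum real eigenvalue are sensible bookkeeping but do not change the argument.
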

\begin{proof}
    By the definition of $P(A_2,\lambda)$, we have
    $$P(A_2,\lambda-k)=\det((\lambda-k)I-A_2)=\det(\lambda I-(A_2+kI))=P(A_2+kI,\lambda).$$
Since $$P(A_2+kI,\lambda)-P(A_1,\lambda)\geq 0 ~\text{for~all}~\lambda \in [\rho(A_1),\rho+k],$$
   applying Lemma \ref{lemma8}, we have $\rho(A_2+kI)=\rho(A_2)+k\leq \rho(A_1)$.
\end{proof}

\begin{lem}\label{lemma7}\cite{Zhan}
	Let $A$ be an $n\times n$ symmetric real matrix. Then $$\rho(A)=\max\limits_{x\in\mathbb{R}^n,||x||=1}x^TAx.$$
\end{lem}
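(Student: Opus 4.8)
The plan is to invoke the spectral theorem for real symmetric matrices and then analyze the Rayleigh quotient $x^T A x$ directly. First I would record that, since $A$ is $n \times n$ symmetric and real, it admits an orthonormal basis $\{u_1, \ldots, u_n\}$ of $\mathbb{R}^n$ consisting of eigenvectors, with associated real eigenvalues $\lambda_1 \geq \lambda_2 \geq \cdots \geq \lambda_n$. In the convention used throughout this paper, $\rho(A)$ denotes the largest eigenvalue $\lambda_1$ (which coincides with the spectral radius for the adjacency matrices under study, by the Perron--Frobenius theorem), so the claim reduces to showing $\max_{\|x\| = 1} x^T A x = \lambda_1$.

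For the upper bound I would expand an arbitrary unit vector as $x = \sum_{i=1}^n c_i u_i$; orthonormality forces $\|x\|^2 = \sum_{i=1}^n c_i^2 = 1$. Using $A u_i = \lambda_i u_i$ and $u_i^T u_j = \delta_{ij}$, a one-line computation yields $x^T A x = \sum_{i=1}^n \lambda_i c_i^2$. Because $\lambda_i \leq \lambda_1$ and $c_i^2 \geq 0$ for every $i$, this sum is at most $\lambda_1 \sum_{i=1}^n c_i^2 = \lambda_1$, so $x^T A x \leq \lambda_1$ for all unit vectors $x$. For the matching lower bound I would simply take $x = u_1$, which gives $u_1^T A u_1 = \lambda_1$; thus the supremum is attained and equals $\lambda_1 = \rho(A)$, justifying the use of $\max$ rather than $\sup$.

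I do not expect a real obstacle, since this is the classical Rayleigh characterization of the extreme eigenvalue of a symmetric matrix. The only point deserving care is the identification $\rho(A) = \lambda_1$: under the literal spectral-radius reading $\rho(A) = \max_i |\lambda_i|$ the stated equality can fail whenever $|\lambda_n| > \lambda_1$, so the lemma must be read with $\rho(A)$ denoting the largest eigenvalue---precisely the interpretation that the companion lemmas make explicit when they require $\rho(A_i)$ to equal the maximum real eigenvalue of $A_i$.
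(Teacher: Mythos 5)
The paper gives no proof of this lemma at all --- it is quoted verbatim from Zhan's \emph{Matrix Theory} --- so there is nothing to compare routes against; your argument is the classical Rayleigh--Ritz proof (orthonormal eigenbasis, $x^TAx=\sum_i\lambda_ic_i^2\le\lambda_1$, attainment at $u_1$) and it is correct and complete. Your closing caveat is not pedantry but genuinely necessary: with $\rho(A)$ read literally as $\max_i|\lambda_i|$ the statement is false (take $A=\mathrm{diag}(1,-2)$, where $\rho(A)=2$ but $\max_{\|x\|=1}x^TAx=1$), so the lemma must be read with $\rho(A)=\lambda_1(A)$, which is consistent with how the paper uses it --- its applications are to nonnegative adjacency matrices, where Perron--Frobenius gives $\rho(A)=\lambda_1$, and in the one signed-graph application (the matrix $A(\hat G)$ in Subcase 1.3) the authors explicitly split into the cases $\lambda(\hat G)>0$ and $\lambda(\hat G)<0$ precisely because this identification can fail there.
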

\begin{lem}(Perron-Frobenius Theorem, \cite{Zhan})
    If $A$ is an irreducible  nonnegative square matrix, then
    $\rho(A)$ is a simple  eigenvalue of $A$.
\end{lem}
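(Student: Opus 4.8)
The plan is to first establish that $\rho(A)$ is an eigenvalue admitting a strictly positive eigenvector, and then to upgrade this to simplicity in two stages: geometric simplicity, followed by algebraic simplicity. For the existence step I would use the Collatz--Wielandt characterization: setting $r(x)=\min_{i:\,x_i>0}(Ax)_i/x_i$ for nonzero $x\ge 0$, a compactness argument on the standard simplex shows that $r^{*}:=\max_{x\ge0,\,x\ne0}r(x)$ is attained at some $x^{*}\ge 0$ with $Ax^{*}=r^{*}x^{*}$, and the variational characterization yields $r^{*}=\rho(A)$. The role of irreducibility enters precisely here: since $A$ is irreducible, $(I+A)^{n-1}$ is entrywise positive, so if $x^{*}$ had a zero coordinate then $(I+A)^{n-1}x^{*}=(1+\rho(A))^{n-1}x^{*}$ would be strictly positive yet possess a zero coordinate, a contradiction. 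Hence $x^{*}>0$.

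For geometric simplicity I would apply the same construction to $A^{T}$, which shares the spectral radius and irreducibility of $A$, to obtain a strictly positive left eigenvector $u>0$ for $\rho(A)$. Given any real eigenvector $y$ for $\rho(A)$, I would take the largest $t$ for which $x^{*}-ty\ge 0$; by construction $x^{*}-ty$ is then a nonnegative eigenvector with at least one zero coordinate, so the positivity argument above forces $x^{*}-ty=0$. Splitting a complex eigenvector into its real and imaginary parts reduces the general case to this real one, so the eigenspace of $\rho(A)$ is one-dimensional.

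The main work, and the step I expect to be the genuine obstacle, is promoting geometric simplicity to algebraic simplicity, i.e.\ showing that $\rho:=\rho(A)$ is a simple root of $p(\lambda)=\det(\lambda I-A)$. I would use the identity $p'(\lambda)=\operatorname{tr}\bigl(\operatorname{adj}(\lambda I-A)\bigr)$ together with a sign analysis of the adjugate. For $\lambda>\rho$ the matrix $\lambda I-A$ is invertible with $(\lambda I-A)^{-1}=\sum_{k\ge0}A^{k}/\lambda^{k+1}\ge 0$ and $p(\lambda)>0$, whence $\operatorname{adj}(\lambda I-A)=p(\lambda)(\lambda I-A)^{-1}\ge0$; letting $\lambda\to\rho^{+}$ gives $\operatorname{adj}(\rho I-A)\ge0$. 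Since $\rho I-A$ has rank $n-1$ by geometric simplicity, its adjugate has rank one, and the relations $(\rho I-A)\operatorname{adj}(\rho I-A)=0=\operatorname{adj}(\rho I-A)(\rho I-A)$ force $\operatorname{adj}(\rho I-A)=\alpha\,x^{*}u^{T}$ for some scalar $\alpha$; comparing signs with $x^{*},u>0$ and using that the adjugate is nonzero yields $\alpha>0$. Therefore $p'(\rho)=\operatorname{tr}(\alpha\,x^{*}u^{T})=\alpha\,(u^{T}x^{*})>0$, so $\rho$ is a simple root and $\rho(A)$ is an algebraically simple eigenvalue. The delicate points to get right are the passage to the limit $\lambda\to\rho^{+}$ for the adjugate and the justification that the rank-one adjugate is aligned with the positive left and right eigenvectors; both rest essentially on irreducibility through the positivity of $(I+A)^{n-1}$.
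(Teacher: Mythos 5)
The paper does not prove this lemma at all: it is quoted verbatim as the classical Perron--Frobenius theorem with a citation to Zhan's \emph{Matrix Theory}, so there is no in-paper argument to compare yours against. That said, your proposal is a correct, self-contained proof along the standard textbook lines: Collatz--Wielandt for existence of a positive eigenvector (note that the ``compactness argument'' works because $r(x)=\min_{i:\,x_i>0}(Ax)_i/x_i$ is upper semicontinuous on the simplex --- for an index $i$ attaining $r(x)$ one has $x_i>0$, hence $r(x^{(k)})\le (Ax^{(k)})_i/x^{(k)}_i\to r(x)$ --- and you should say this explicitly, since $r$ is not continuous there); the $(I+A)^{n-1}>0$ trick for strict positivity and for geometric simplicity via the extremal multiple $x^*-ty$; and Jacobi's formula $p'(\lambda)=\operatorname{tr}(\operatorname{adj}(\lambda I-A))$ combined with the sign and rank-one structure of $\operatorname{adj}(\rho I-A)=\alpha x^*u^T$ to get $p'(\rho)=\alpha\,u^Tx^*>0$ and hence algebraic simplicity. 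All the steps check out, and the last one is exactly where the real content lies; the only polish I would ask for is the semicontinuity remark above and a sentence justifying that a maximizer of $r$ is actually an eigenvector (apply $(I+A)^{n-1}$ to $Ax^*-r^*x^*\ge 0$ and use maximality to rule out a strict inequality in any coordinate). For the purposes of this paper, simply citing the result, as the authors do, is the appropriate choice.
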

\begin{lem}\label{lemma16}
Let $G$ be a connected graph with maximum degree $\Delta$. Suppose $X$ is the Perron vector of $G$ with maximum component $x$. Then $\rho(G)x<\sqrt{\Delta}$.
\end{lem}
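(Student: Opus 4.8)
The plan is to work directly with the eigenvalue equation at the vertex realizing the maximum Perron component, and to recover the extra factor $\sqrt{\Delta}$ — rather than the trivial bound $\rho(G)\le\Delta$ — by feeding the unit-norm normalization of the Perron vector into the Cauchy--Schwarz inequality. This turns a coordinate identity into the desired quadratic estimate.

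First I would fix notation. Since $G$ is connected, the Perron--Frobenius theorem supplies a unique positive unit eigenvector $X=(x_1,\dots,x_n)^T$ with $A(G)X=\rho(G)X$ and $\sum_i x_i^2=1$. Let $k$ be a vertex with $x_k=x$, the maximum component. Reading off the $k$-th coordinate of $A(G)X=\rho(G)X$ gives $\rho(G)x=\sum_{j\sim k}x_j$, the sum running over the neighbours of $k$.

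Next I would apply Cauchy--Schwarz to this sum: $\sum_{j\sim k}x_j\le\sqrt{d(k)}\,\bigl(\sum_{j\sim k}x_j^2\bigr)^{1/2}\le\sqrt{\Delta}\,\bigl(\sum_{j\sim k}x_j^2\bigr)^{1/2}$, where the second inequality uses $d(k)\le\Delta$. The crucial observation is that $G$ is simple, so $k$ is not among its own neighbours; hence $\{j:j\sim k\}\subseteq V(G)\setminus\{k\}$, and therefore $\sum_{j\sim k}x_j^2\le\sum_{j\ne k}x_j^2=1-x_k^2=1-x^2$. Combining these estimates yields $\rho(G)x\le\sqrt{\Delta}\,\sqrt{1-x^2}$.

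Finally, since the Perron vector is strictly positive we have $x>0$, so $1-x^2<1$ and thus $\sqrt{1-x^2}<1$, giving the strict bound $\rho(G)x<\sqrt{\Delta}$. I do not anticipate a genuine obstacle: the argument is short. The one point worth flagging is the origin of the strict inequality — it comes entirely from omitting the $x_k^2$ term in the neighbour sum of squares, which in turn relies on $G$ having no loop at $k$. Had loops been permitted, the same computation would only deliver $\rho(G)x\le\sqrt{\Delta}$, so simplicity of $G$ is exactly what upgrades $\le$ to $<$.
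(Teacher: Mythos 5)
Your proof is correct and is essentially the paper's own argument: the paper's step $Y^TJ_dY\le dY^TY$ (via positive semidefiniteness of $dI-J_d$) is exactly your Cauchy--Schwarz application, and both proofs obtain strictness from the fact that the neighbour sum of squares omits the positive term $x_k^2$ from the unit-norm total. No substantive difference.
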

\begin{proof}
Suppose  $X=(x_1,\ldots,x_n)$ with $x_i$ corresponding to the vertex $i\in V(G)$. Let $u$ be a vertex such that $x_u=x$ and  $N_G(u)=\{w_1,w_2,\dots,w_{d}\}$. Let $Y=(x_{w_1},\dots,x_{w_{d}})^T$,
and let $J_{d}$ be the $d\times d$ matrix whose entries are all ones. Since $dI-J_{d}$ is positive semidefinite,
we have $Y^T(dI-J_{n-3})Y\ge 0$.
By the characteristic equation  of $G$, we have
$\rho(G)x=\Sigma_{i=1}^{d}x_{w_i}$, which leads to $$\rho^2(G) {x}^2=(\Sigma_{i=1}^{d}x_{w_i})^2=Y^TJ_{d}Y.$$
 It follows that $$\rho^2(G) {x}^2=Y^TJ_{d}Y\le dY^TY<d X^TX=d\le \Delta,$$ which implies
    \begin{align*}
	\rho(G)x<\sqrt{\Delta}.
	\end{align*}
\end{proof}
\section{Proof of Theorem \ref{theo7}}
\begin{proof}
Suppose  $G\in \mathcal{G}(n,n-2)$ with
$V(G)=[n]$. Let $ X=(x_1,x_2,\dots,x_n)^T$ be the Perron vector of $G$ with $x_k$  corresponding to the vertex $k\in V(G)$.

Since $G$ is nonregular, we have $|S|\geq 1$.
Suppose $|S|\geq 2$. Let $u,v\in S$ be distinct vertices with $x_u=\max_{i\in S} x_i$.
Since $d(w)=\Delta=n-2$ for all $ w\in T$,   $w$ must be adjacent to at least one of  $u,v$.
Since $x_u\geq x_v$, applying Lemma \ref{lemm4}  we have $N_T(u)\supseteq N_T(v)$, which implies  $w\in N(u)$ for all $w\in T$. Therefore, we have $d_T(u)=|T|$. On the other hand,
by Lemma \ref{lemm2} we have $d_S(u)=|S|-1$. Hence,
 $$d(u)=d_T(u)+d_S(u)=|T|+|S|-1=n-1>\Delta=n-2,$$ a contradiction. Therefore, we get
$|S|=1$.

Suppose $S=\{u\}$ with $d(u)=\delta<n-2=\Delta$.
Since $\sum_{v\in G}d(v)=\delta+(n-1)(n-2)$ is even and $G$ is  connected,  $\delta$ is a positive even number.
Let $$T_1=N(u),T_2=V(G)\setminus N[u].$$ Then $\{T_1, T_2\}$ is a partition of $T$ with $|T_1|=\delta\ge 2$ and $|T_2|=n-\delta-1\ge 2$.
Notice that $d(w)=\Delta=n-2$ for all $w\in T$. For every $v \in T_2$,
 $v$ is adjacent to all the vertices in $T\setminus\{v\}$; for evert vertex $v\in T_1=N(u)$,
  $v$ is  adjacent to all but one vertex in $T\setminus\{v\}$.
Therefore, we have $$G[T_1]= K_\delta-M_{\delta/2} \quad {\rm and}\quad G[T_2]=  K_{n-\delta-1}.$$ Moreover, $v_1v_2\in E(G)$ for all
 $v_1\in T_1,v_2\in T_2$. Hence, $G$   is isomorphic to the following graph $G(n,\delta)$; see Figure \ref{fig:1}.

\begin{figure}[H]
	\centering
	\begin{tikzpicture}
	    \draw  (3,1) ellipse [x radius=1cm, y radius=0.8cm];
		\draw  (6,1) ellipse [x radius=1cm, y radius=0.8cm];
		\draw(0,1)[fill]circle[radius=1.0mm];
		\node at(3,1){$K_\delta-M_{\delta/2}$};
		\node at (6,1){$K_{n-\delta-1}$};
		\draw(0,1)--(2,1);
		\draw(4,1)--(5,1);
		\node at(0,0.2)[above=3pt]{$u$};
		\node at(3,-0.8)[above=3pt]{$N(u)$};
		\node at(6,-0.8)[above=3pt]{$V(G)\setminus N[u]$};
	\end{tikzpicture}
	\caption{$G(n,\delta)$}
	\label{fig:1}
\end{figure}
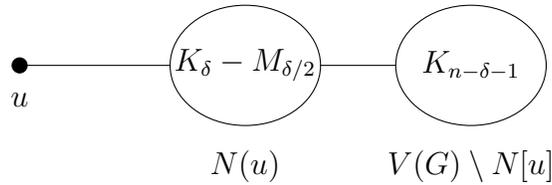

Notice that $\pi=\{\{u\},T_1,T_2\}$ is an  equitable partition of $V(G)$ with
 quotient matrix
\begin{align*}
A_\delta=&
\begin{pmatrix}
0 & \delta & 0 \\
1 & \delta-2 & n-\delta-1 \\
0 & \delta & n-\delta-2
\end{pmatrix},
\end{align*}
whose characteristic polynomial is
\begin{eqnarray*}
det(\lambda I-A_\delta)&=&
\begin{vmatrix}
\lambda & -\delta & 0\\
-1 & \lambda-\delta+2 & -n+\delta+1\\
0 & -\delta & \lambda-n+\delta+2
\end{vmatrix} =\begin{vmatrix}
\lambda & -\delta & 0\\
-1 & \lambda+2 & -\lambda-1\\
0 & -\delta & \lambda-n+\delta+2
\end{vmatrix}\\
&=&\lambda^3+(4-n)\lambda^2+(4-2n)\lambda-(\delta^2+2\delta-n\delta)\equiv f(\delta,\lambda).
\end{eqnarray*}
Applying Lemma \ref{lemm6}  we have $\rho(G(n,\delta))=\rho(A_\delta)$. Since $A_\delta$ is an irreducible nonnegative matrix,
by the Perron-Frobenius theorem, $\rho(A_\delta)$ equals the maximum real eigenvalue of $A_{\delta}$.

Given two integers $\delta_1,\delta_2$, we have
$$f(\delta_2,\lambda)-f(\delta_1,\lambda)=\delta_1^2+2\delta_1-n\delta_1-\delta_2^2-2\delta_2+n\delta_2=(\delta_1-\delta_2)(\delta_1+\delta_2-n+2).\label{equ11}$$
Recall that $\delta$ is even and $2\leq \delta \leq n-3$.
If $n$ is odd, then $$f(\delta,\lambda)-f(n-3,\lambda)\geq 0 \quad \text{  for all real number}~  \lambda,$$ with equality if and only if $\delta=n-3$.
Applying Lemma \ref{lemma8}, we have $$\rho(G(n,\delta))=\rho(A_\delta)\leq \rho(A_{n-3})=\rho(G(n,{n-3})),$$ with equality if and only if $\delta=n-3$. Therefore, $G\in\mathcal{G}(n,n-2)$ if and only if $G\cong G(n,n-3)$.

If $n$ is even, then $$f(\delta,\lambda)-f(n-4,\lambda)\geq 0 \quad \text{  for all real number}~  \lambda,$$ with equality if and only if $\delta\in\{2,n-4\}$.
By Lemma \ref{lemma8}, we have $$\rho(G(n,\delta))=\rho(A_\delta)\leq \rho(A_{n-4})=\rho(G(n,{n-4})),$$ with equality if and only if $\delta\in\{2,n-4\}$.
Therefore, $G\in\mathcal{G}(n,n-2)$ if and only if $G\cong G(n,2)$ or $G\cong G(n,n-4)$. This completes the proof.
\end{proof}
\section{Proof of Theorem \ref{theo8}}
\begin{proof}
 Suppose $G\in \mathcal{G}(n,n-3)$.   We assert that $|S|\in \{1,2\}$. Let $ \alpha=(\alpha_1,\alpha_2,\dots,\alpha_n)^T$ be the Perron vector of $G$ with $x_k$  corresponding to the vertex $k\in V(G)$.
 Since $G$ is nonregular, we have $|S|\geq 1$.
 Suppose  $S$ has three distinct vertices $u_1,u_2,u_3$ with $\alpha_{u_1}=\max\{\alpha_i|i\in S\}$.
 Since $d(w)=\Delta=n-3$ for all $w\in T$,  $w$ must be adjacent to at least one of the vertices $u_1,u_2,u_3$.
 By Lemma \ref{lemm4}, we can deduce $w\in N_T(u_1)$  for all $w\in T$, which implies $d_T(u_1)=|T|$.
 On the other hand, Lemma \ref{lemm2} implies $d_S(u_1)=|S|-1$. Hence, we have
 $$d(u_1)=d_T(u_1)+d_S(u_1)=|T|+|S|-1=n-1,$$ which contradicts $\Delta=n-3$. Therefore,   $|S|\in \{1,2\}$. We distinguish two cases.

{\it Case 1.} $|S|=1$, say, $S=\{u\}$.
Then $G\in \mathcal{G}(n,n-3)$ has degree sequence ($\Delta,\dots,\Delta,\delta$) with $\delta<\Delta=n-3$.
Since $\sum_{v\in V(G)}d(v)=(n-1)(n-3)+\delta$,   $\delta$ is odd when $n$ is even and $\delta$ is even when $n$ is odd, which implies $\delta\le n-5$. Now we deduce the possible   spectral radius of  $G$ for different choices of $\delta$.

{\it Subcase 1.1.} $\delta=1$. Then $n$ is even.
Suppose $u\in V(G)$ with   $N(u)=\{v_1\}$. Since $d(v_1)=n-3$, there are exactly two vertices $v_2,v_3$ not in the closed neighborhood of $v_1$, i.e., $ N[v_1]=V(G)\setminus\{v_2,v_3\}$. Let $T'\equiv V(G)\setminus\{u,v_1,v_2,v_3\}$. Then $N(v_1)=\{u\}\cup T'$. Since
 $d_{G}(v)=\Delta=n-3$ for all $v\in V(G)\setminus \{u\}$, $v_2$ and $ v_3$ are  adjacent to all the  vertices in $V(G)\setminus \{u,v_1\}$, which implies that $T'$ induces a graph obtained from $K_{n-4}$ by deleting a perfect matching, i.e., $G[T']=K_{n-4}-M_{(n-4)/2}$.
Therefore, $G$ is isomorphic to the following graph $G_1$, which is isomorphic to $H_1(n)$;  See Figure \ref{fig:3}.
\begin{figure}[H]
	\centering
	\begin{subfigure}{.45\textwidth}
	\centering
	\begin{tikzpicture}
	\draw  (2.5,1) ellipse [x radius=1.0cm, y radius=0.7cm];
	\draw(-0.5,1)[fill]circle[radius=1.0mm];
	\draw(0.5,1)[fill]circle[radius=1.0mm];
    \draw(4.5,1.5)[fill]circle[radius=1.0mm];
    \draw(4.5,0.5)[fill]circle[radius=1.0mm];
	\node at(2.5,1){$T'$};
	\draw(-0.5,1)--(0.5,1);
	\draw(0.5,1)--(1.5,1);
	\draw(3.4,1.3)--(4.5,1.5);
    \draw(3.4,0.7)--(4.5,0.5);
	\draw(4.5,1.5)--(4.5,0.5);
	\node at(-0.5,1)[above=3pt]{$u$};
	\node at(0.5,1)[above=3pt]{$v_1$};
	\node at(4.5,1.5)[right]{$v_2$};
	\node at(4.5,0.5)[right]{$v_3$};
	\end{tikzpicture}
	\end{subfigure}
\caption{$G_1$, where $ {G[T']}=K_{n-4}-M_{\frac{n-4}{2}}$}
	\label{fig:3}
\end{figure}
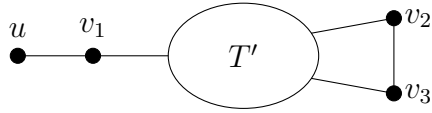

Since $n\ge 59$, we have $T'\neq\emptyset$. Therefore, $\pi=\{\{u\},\{v_1\},\{v_2,v_3\},T'\}$ is an equitable partition of $V(G_1)$ with quotient matrix
\begin{align*}
B_1&=
\begin{pmatrix}
0 & 1 & 0 & 0\\
1 & 0 & 0 & n-4 \\
0 & 0 & 1 & n-4\\
0 & 1 & 2 & n-6
\end{pmatrix},
\end{align*}
 whose characteristic polynomial   is
 \begin{equation}\label{Eq01}
 P(B_1,\lambda)=\lambda^4+(5-n)\lambda^3+(5-2n)\lambda^2+(2n-9)\lambda+n-2\equiv f_1(\lambda).
 \end{equation}
Moreover, $\rho(G)=\rho(G_1)= \rho(B_1)$ is the maximum real eigenvalue of $B_1$, as $B_1$ is an irreducible  nonnegative matrix.

{\it Subcase 1.2.} $\delta=2$.  Then $n$ is odd. Suppose $u\in V(G)$ with $N(u)=\{v_1,v_2\}$. We distinguish two subcases.

{\it Subcase 1.2.1.} $v_1v_2\not\in E(G)$. Recall that $d(v_1)=d(v_2)=n-3$. Suppose
 $V(G)-N[v_1]= \{v_2,v_3\},  V(G)-N[v_2]= \{v_1,v_4\}.$ Then $$ N[v_3]=V(G)\setminus \{u,v_1\}, \quad N[v_4]=V(G)\setminus \{u,v_2\}.$$
Let $T'=V(G)\setminus\{u,v_i, i=1,2,3,4\}$.  Since $u\not\in N(v)$ and $ \{v_1,v_2,v_3,v_4\}\subseteq N(v)$ for all $v\in T'$, we  have $G[T']=K_{n-5}-M_{(n-5)/2}$. Therefore, $G\equiv G_2^{(1)}$ has the following diagram; see
Figure \ref{fig:4}.
\begin{figure}[H]
	\centering
	\begin{subfigure}{.4\textwidth}
	\centering	
	\begin{tikzpicture}
	\draw(2.5,0) ellipse [x radius=1.5cm, y radius=1cm];
	\draw(0,1.5)[fill]circle[radius=1.0mm];
	\draw(0,0.5)[fill]circle[radius=1.0mm];
	\draw(0,-0.5)[fill]circle[radius=1.0mm];
	\draw(0,-1.5)[fill]circle[radius=1.0mm];
	\draw(-1.5,0)[fill]circle[radius=1.0mm];
	\node at(2.5,0){$K_{n-5}-M_{\frac{n-5}{2}}$};
	\node at(-1.5,0)[above=3pt]{$u$};
	\node at(0,1.5)[above=3pt]{$v_1$};
	\node at(0,-1.5)[below=3pt]{$v_2$};
	\node at(0,0.5)[left=2pt]{$v_4$};
	\node at(0,-0.5)[left=2pt]{$v_3$};
	\draw(-1.5,0)--(0,1.5);
	\draw(-1.5,0)--(0,-1.5);
	\draw(0,1.5)--(0,0.5);
	\draw(0,0.5)--(0,-0.5);
	\draw(0,-0.5)--(0,-1.5);
	\draw(0,1.5)--(1.175,0.5);
	\draw(0,0.5)--(1,0.15);
	\draw(0,-0.5)--(1,-0.15);
	\draw(0,-1.5)--(1.175,-0.5);
    \node at (1,-2.5) {$G_2^{(1)}$};
	\end{tikzpicture}
	\end{subfigure}
    \begin{subfigure}{.4\textwidth}
    \centering
    \begin{tikzpicture}
	\draw(4,0) ellipse [x radius=1.5cm, y radius=0.5cm];
	\draw(4,1.5) ellipse [x radius=1.2cm, y radius=0.5cm];
	\draw(4,-1.5) ellipse [x radius=1.2cm, y radius=0.5cm];
	\draw(1.5,1)[fill]circle[radius=1.0mm];
	\draw(1.5,-1)[fill]circle[radius=1.0mm];
	\draw(0,0)[fill]circle[radius=1.0mm];
	\node at(4,0){$K_{n-7}-M_{\frac{n-7}{2}}$};
	\node at(5.8,0){$T''$};
	\node at(5.4,2){$V_2$};
	\draw(3.5,1.5)[fill]circle[radius=1.0mm]--(4.5,1.5)[fill]circle[radius=1.0mm];
	\node at(3.5,1.5)[left=2pt]{$v_4$};
	\node at(4.5,1.5)[right=2pt]{$v_6$};
	\node at(5.4,-2){$V_1$};
	\node at(0,0)[below=2pt]{$u$};
	\node at(1.5,1)[left=2pt]{$v_1$};
	\node at(1.5,-1)[left=2pt]{$v_2$};
	\draw(3.5,-1.5)[fill]circle[radius=1.0mm]--(4.5,-1.5)[fill]circle[radius=1.0mm];
	\node at(3.5,-1.5)[left=2pt]{$v_3$};
	\node at(4.5,-1.5)[right=2pt]{$v_5$};
	\draw (0,0)--(1.5,1);
	\draw (0,0)--(1.5,-1);
	\draw (1.5,1)--(1.5,-1);
	\draw (4,1)--(4,0.5);
	\draw (4,-1)--(4,-0.5);
	\draw (1.5,1)--(2.8,1.5);
	\draw (1.5,-1)--(2.8,-1.5);
	\draw (1.5,1)--(3,0.37);
	\draw (1.5,-1)--(3,-0.37);
	\draw (5.2,-1.5) arc(-90:90:1.5);
    \node at (3.5,-2.5) {$G_2^{(2)}$};
	\end{tikzpicture}
    \end{subfigure}
\caption{$G_2^{(1)},G_2^{(2)}$}
	\label{fig:4}
\end{figure}
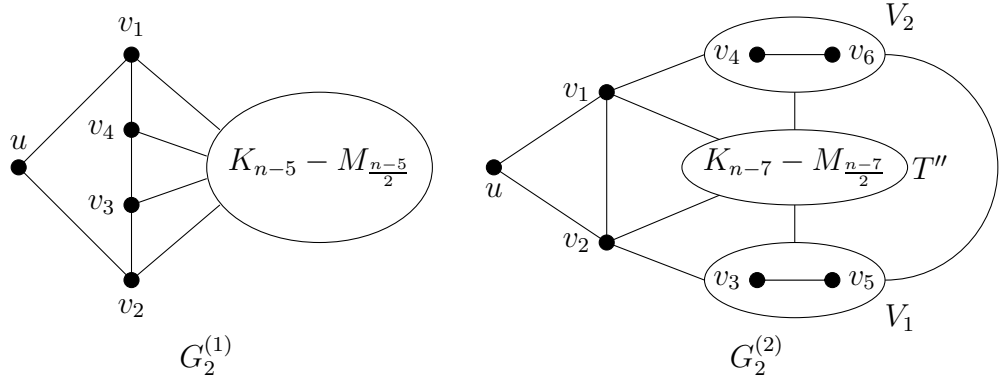

{\it Subcase 1.2.2.} $v_1v_2\in E(G)$. Suppose  $V(G)-N[v_1]=  \{v_3,v_5\}, V(G)-N[v_2]= \{v_4,v_6\}$.
Then $$N[v_3]= N[v_5]=V(G)\setminus \{u,v_1\},\quad N[v_4]=N[v_6]=V(G)\setminus \{u,v_2\}.$$
Let $T''=V(G)\setminus\{u,v_i, i=1,2,\cdots,6\}$.  Since $u\not\in N(v)$ and $ \{v_1,v_2,v_3,v_4,v_5,v_6\}\subseteq N(v)$ for all $v\in T''$, we  have $G[T'']=K_{n-7}-M_{(n-7)/2}$. Therefore,   $G\equiv G_2^{(2)}$ has the   diagram as shown in Figure \ref{fig:4}, which is isomorphic to $H_2(n)$.

Choose $v_5,v_6\in T'$ such that $v_5$ and $v_6$ are not adjacent in $G_2^{(1)}$. Then $G^{(2)}_2=G^{(1)}_2+v_1v_2+v_5v_6-v_1v_5-v_2v_6$. Let $X=(x_1,x_2,\ldots,x_n)^T$ be the Perron vector of $G^{(1)}_2$ with $x_i$ corresponding to the vertex $v_i$ for $i=1,\ldots,n$. Without loss of generality, we assume $v_i=i$ for $i=1,\ldots,6$. By symmetry, we have $x_{2k-1}=x_{2k}$ for $k=1,2,3$. Since
$$\rho(G^{(2)}_2)\ge X^TA(G^{(2)}_2)X,\quad \rho(G^{(1)}_2)=X^TA(G^{(1)}_2)X,$$
 we have $\rho(G^{(2)}_2)\ge\rho(G^{(1)}_2)+2(x_1-x_5)^2$.

Considering  the characteristic equation  of  $G^{(1)}_2$,
 we have
\begin{equation*}
\left\{
\begin{aligned}
  &\rho(G^{(1)}_2)x_1=\Sigma_{v_i\in T'}x_i+x_u+x_4=(n-5)x_5+x_u+x_3,\\
  &\rho(G^{(1)}_2)x_5=\Sigma_{v_i\in T'}x_i-x_5-x_6+x_1+x_2+x_3+x_4=(n-7)x_5+2x_1+2x_3,
\end{aligned}
\right.
\end{equation*}
which lead to $(\rho(G^{(1)}_2)+2)(x_1-x_5)=x_u-x_3$.
If $x_1=x_5$, then we get $x_u=x_3$, which contradicts Lemma \ref{lemm5}.
Therefore, we have $x_1\ne x_5$ and
 $$\rho(G^{(2)}_2)\ge\rho(G^{(1)}_2)+(x_1-x_5)^2>\rho(G^{(1)}_2).$$

Notice that $\pi=\{\{u\},\{v_1,v_2\},\{v_3,v_4,v_5,v_6\},T')\}$ is  an equitable partition of $V(G_2^{(2)})$ with quotient matrix
\begin{align*}
B_2&=
\begin{pmatrix}
0 & 2 & 0 & 0\\
1 & 1 & 2 & n-7 \\
0 & 1 & 3 & n-7\\
0 & 2 & 4 & n-9
\end{pmatrix},
\end{align*}
whose characteristic polynomial is
\begin{equation}\label{Eq02}
P(B_2,\lambda)=\lambda^4+(5-n)\lambda^3+(5-2n)\lambda^2+(3n-17)\lambda+2n-2\equiv f_2(\lambda).
\end{equation}
By the Perron-Frobenius theorem,  $\rho(G)=\rho(G_2^{(2)})=\rho(B_2)$ is the maximum real eigenvalue of $B_2$, as $B_2$ is irreducible.

{\it Subcase 1.3.} $3\le \delta\le n-5$.
Let $T_1=N(u),T_2=V(G)\setminus N[u]$. Then $\{T_1,T_2\}$ is a partition of $T$ with $|T_1|=\delta$ and $|T_2|=n-\delta-1$.
Suppose   $H$ is the complement of the graph $G-u$, which consists of
 $k$ connected components   $H_1, \ldots, H_k$.
Since $d_G(v)=\Delta=n-3$ for all   $v\in T$, we have
$$d_H(v)=\left\{\begin{array}{ll}
   2,& \text{ if } v\in T_1;\\
   1,&\text{ if } v\in T_2.
   \end{array}\right.$$
Therefore, for each $i\in \{1,\ldots,k\}$,  $H_i$ must be  one of the following three types.
	\begin{itemize}
\item\vskip -0.3cm{\bf Type (I)}: an edge with both ends from $T_2$;\par
	\item\vskip -0.3cm{\bf Type (II)}: a path of length $\ge 2$ with both ends from $T_2$ and all other vertices from $T_1$;\par
	\item\vskip -0.3cm{\bf Type (III)}: a cycle with all vertices from $T_1$.\par	
\end{itemize}
  Denoted by $m_1(G)$, $m_2(G)$ and $m_3(G)$ the numbers of  Type (I), Type (II) and Type (III) components in $\overline{G-u}$, respectively. We simply write $m_1,m_2,m_3$ when no confusion arises. By counting  the number of vertices from $T_1$ and $T_2$ in all components, we have
$$2m_2\leq 2(m_1+m_2)=|T_2|=n-\delta-1\quad \text{ and }\quad
	 m_2\le|T_1|=\delta,$$
which leads to
\begin{equation}\label{eqh1}
 m_2\le\frac{n-1}{3}.
 \end{equation}

Now we define an operation on the graph $G^{loop}$.
\begin{itemize}
{{\item   Operation 1.} Suppose   $H_{i}$ is a Type (II) component, say, $H_{i}=v_1v_2\cdots v_{t-1}v_t$ with $v_1,v_t\in T_2$ and $v_2,\ldots,v_{t-1}\in T_1$.
We construct a new graph  $\tilde{G}^{loop}$ by doing the following operation on $G^{loop}$:
\begin{itemize}
\item[(i)]
if $t>4$,  add the edges $v_1v_2,v_{t-1}v_t$ and delete the edges $v_1v_t,v_2v_{t-1}$;
\item[(ii)]
if $t=4$, add the edges $v_1v_2,v_2v_3,v_3v_4$,  delete the edge $v_1v_4$ and the loops incident to $v_2,v_3$;
\item[(iii)]if $t=3$, add the edges $v_1v_2,v_2v_3$,  delete the edge $v_1v_3$ and the loop  incident to $v_2$.
\end{itemize}}
\end{itemize}
Then
\begin{equation*}
\begin{aligned}
  &\Tilde{G}^{loop}=\left\{\begin{array}{ll}
  G^{loop}+v_1v_2+v_{t-1}v_t-v_1v_t-v_2v_{t-1},&\text{ if }t>4;\\
   G^{loop}+v_1v_2+v_2v_3+v_3v_4-v_1v_4-loop(v_2)-loop(v_3),&\text{ if } t=4;\\
 G^{loop}+v_1v_2+v_2v_3-v_1v_3-loop(v_2),&\text{ if }t=3.
 \end{array}\right.
\end{aligned}
\end{equation*}

Without loss of generality, we assume  $v_i=i$ for $i\in [t]$. Let $X=(x_1,x_2,\dots,x_n)^T$ be the Perron vector of $G$ and $ G^{loop} $ with $x_k$ corresponding to the vertex $k$ for $k=1,\ldots,n$, and
let $\tilde{X}=(\tilde{x}_1,\tilde{x}_2,\dots,\tilde{x}_n)^T$ be the Perron vector of $\Tilde{G}^{loop}$ with $\tilde{x}_k$ corresponding to the vertex $k$ for $k=1,\ldots,n$.
By symmetry, we have $$x_1=x_t,~~x_2=x_{t-1}\quad\text{ and }\quad \tilde{x}_1=\tilde{x}_t,~~\tilde{x}_2=\tilde{x}_{t-1}.$$
Since $$\rho(G^{loop})=X^TA(G^{loop})X\ge \tilde{X}^TA(G^{loop})\tilde{X}$$   \text{and}$$
 \rho(\Tilde{G}^{loop})=\tilde{X}^TA(\Tilde{G}^{loop})\tilde{X}\ge X^TA(\Tilde{G}^{loop})X,$$
we have
    \begin{equation*}
	  \rho(G^{loop})-\rho(\Tilde{G}^{loop}) \leq X^TA(G^{loop})X-X^TA(\Tilde{G}^{loop})X=2(x_1-x_2)^2,
	\end{equation*}
which leads to
	\begin{align}
	\rho(\Tilde{G}^{loop})\le \rho(G^{loop})\le\rho(\Tilde{G}^{loop})+2(x_1-x_2)^2.\label{1.1}
	\end{align}

Suppose   $\hat{x}=\max_{w\in V(G)} x_w$.  Applying Lemma \ref{lemma16}, we have
    \begin{align}
	&\hat{x}^2<\frac{n-3}{\rho^2(G)}.
    \label{1.3}
	\end{align}
 Since $x_1\in T_2$ and $x_2\in T_1$, by using the characteristic equation  of $G$,
we have
\begin{equation*}
\left\{
\begin{aligned}
  &\rho(G)x_1=\Sigma_{w\in V(G)}x_w-x_u-x_2-x_1,\\
  &\rho(G)x_2=\Sigma_{w\in V(G)}x_w-x_1-x_3-x_2,
  \label{1.2}
\end{aligned}
\right.
\end{equation*}
which lead to  $$\rho(G)(x_1-x_2)=x_3-x_u\le \hat{x}-x_u<\hat{x}.$$
Combining this with (\ref{1.1}), we have
	\begin{align}
	\rho(G^{loop})<\rho(\Tilde{G}^{loop})+\frac{2}{\rho^2(G)}\hat{x}^2.
    \label{1.5}
	\end{align}

Since $G$ is  nonregular, we have $\overline{d}(G)\le\rho(G)<\Delta(G)$.
It follows that
\begin{align}
  n-4<\frac{(n-1)(n-3)+\delta}{n}=\overline{d}(G)\leq\rho(G)<\Delta(G)=n-3.
  \label{1.6}
\end{align}
Combining  (\ref{1.3}), (\ref{1.5}) and (\ref{1.6}), we have
    \begin{align} \rho(G^{loop})<\rho(\Tilde{G}^{loop})+\frac{2(n-3)}{\rho^4(G)}<\rho(\Tilde{G}^{loop})+\frac{2(n-3)}{(n-4)^4}=\rho(\Tilde{G}^{loop})+\frac{2}{(n-4)^3}+\frac{2}{(n-4)^4}.\label{1.7}
    \end{align}
\indent
 Recall that $H$ has $m_2$ Type (II) components and $G$ has degree sequence $(\Delta,\ldots, \Delta,\delta)$. Denoted by $G^\delta$ the graph obtained by doing $m_2$ Operations 1 on $G^{loop}$ at the same time. Then by  \eqref{eqh1}, \eqref{1.1} and (\ref{1.7}), we have
    \begin{align}
    \rho(G^\delta) \leq \rho(G^{loop})<\rho(G^\delta)+\frac{2m_2}{(n-4)^3}+\frac{2m_2}{(n-4)^4}\le \rho(G^\delta)+\frac{2(n-1)}{3(n-4)^3}+\frac{2(n-1)}{3(n-4)^4}.\label{9}
	\end{align}
 Moreover, the complement of $G^\delta-u$ consists of only Type (I) and Type (III) components. So $\pi=(\{u\},T_1,T_2)$ is an equitable partition of $V(G^\delta)$.
Denoted by $B^{(\delta)} $ the quotient matrix of   $G^\delta$ corresponding to $\pi$ .
By Lemma \ref{lemm6} and \eqref{9},
we have $\rho(B^{(\delta)})=\rho(G^{\delta  })$ and
 \begin{align}\label{eq112}
	&\rho(B^{(\delta)}) \le \rho(G^{loop})<\rho(B^{(\delta)})+\frac{2(n-1)}{3(n-4)^3}+\frac{2(n-1)}{3(n-4)^4}.
	\end{align}

If $\delta\geq 3$, then
    \begin{align*}
    B_{\delta}& \equiv B^{(\delta)}-2I=
    \begin{pmatrix}
    0 & \delta & 0 \\
    1 & \delta-3 & n-\delta-1 \\
    0 & \delta & n-\delta-3
    \end{pmatrix}
    \end{align*}
has the characteristic polynomial
    \begin{equation*}\label{23}
	P(B_{\delta},\lambda)=\lambda^3+(6-n)\lambda^2+(9-3n)\lambda-\delta^2+(n-3)\delta.
	\end{equation*}

Since $3\leq \delta\leq n-5$,
 $B_{\delta}$ is an irreducible nonnegative matrix.
By the Perron-Frobenius theorem,  $\rho(B_\delta)$ equals the maximum real eigenvalue of $B_{\delta}$. Notice that
$$\rho(B^{(\delta)})=\rho(B_{\delta}+2I)=\rho(B_{\delta})+2\quad \text{ and }\quad \rho(G^{loop})=\rho(G)+2.$$
By \eqref{eq112} we have
\begin{align}
  &\rho(B_{\delta})\le \rho(G)<\rho(B_{\delta})+\frac{2(n-1)}{3(n-4)^3}+\frac{2(n-1)}{3(n-4)^4}\le \rho(B_{\delta})+\frac{1}{n^2},\label{10}
\end{align}
where the last inequality follows from $n\geq 59$.

 For any $3\le\delta_1<\delta_2=n-5$, we consider $P(B_{\delta_1},\lambda-\frac{1}{n^2})-P(B_{\delta_2},\lambda)$.
For any $\lambda\in [n-4,n-3+\frac{1}{n^2}]$, we have
    \begin{equation*}
	\begin{aligned}
P(B_{\delta_1},\lambda-\frac{1}{n^2})-P(B_{\delta_2},\lambda)
= & (\lambda-\frac{1}{n^2})^3+(6-n)(\lambda-\frac{1}{n^2})^2+(9-3n)
    (\lambda-\frac{1}{n^2})\\
    &-[\lambda^3+(6-n)\lambda^2+(9-3n)\lambda]
     +\delta_2^2-\delta_1^2+(n-3)\delta_1-(n-3)\delta_2.
   \end{aligned}
	\end{equation*}
Recall that $\delta_1$ and $\delta_2$ have the same parity. We have
    \begin{equation*}
	\begin{aligned}
&\delta_2^2-\delta_1^2+(n-3)\delta_1-(n-3)\delta_2  =(\delta_2-\delta_1)(\delta_1+\delta_2+3-n)\ge 2.
	\end{aligned}
	\end{equation*}
Hence,
    \begin{equation*}
	\begin{aligned}
P(B_{\delta_1},\lambda-\frac{1}{n^2})-P(B_{\delta_2},\lambda)\ge&-\frac{3\lambda^2}{n^2}+\frac{3\lambda}{n^4}-\frac{1}{n^6}-\frac{12\lambda}{n^2}+\frac{2\lambda}{n}
    +\frac{6}{n^4}-\frac{1}{n^3}+\frac{3}{n}-\frac{9}{n^2}+2\\
    =&\frac{1}{n^2}(-3\lambda^2+(2n-12+\frac{3}{n^2})\lambda)+2+\frac{3}{n}-\frac{9}{n^2}-\frac{1}{n^3}+\frac{6}{n^4}-\frac{1}{n^6}\\
    \geq& \frac{1}{n^2}(-3n^2+(2n-12+\frac{3}{n^2})n)+2+\frac{3}{n}-\frac{9}{n^2}-\frac{1}{n^3}+\frac{6}{n^4}-\frac{1}{n^6}\\
    =&1-\frac{9}{n}-\frac{9}{n^2}+\frac{2}{n^3}+\frac{6}{n^4}-\frac{1}{n^6}\\
    \geq& \frac{n^2-9n-9}{n^2}+\frac{2}{n^3}+\frac{6n^2-1}{n^6}\geq 0.
    \end{aligned}
    \end{equation*}

By Lemma \ref{lemma9} and (\ref{10}), we have
$$\rho(G_{\delta_1})<\rho(B_{\delta_1})+\frac{1}{n^2}\le\rho(B_{\delta_2})\leq \rho(G_{\delta_2})=\rho(G_{\delta_{n-5}}).$$
Therefore, if the graph $G\in \mathcal{G}(n,n-3)$ has  degree sequence $(\Delta,\dots,\Delta,\delta)$ with $3\le\delta\le n-5$, then $\delta= n-5$.

Suppose $u\in V(G)$ with $d(u)=n-5$. Then
 $T_1=N(u)$ and $T_2=V(G)\setminus N[u]$ have cardinalities $|T_1|=n-5$ and $|T_2|=4$.
Recalling the definition of $m_1,m_2,m_3$,
we have $m_1+m_2=2$.
Suppose $G\in \mathcal{G}(n,n-3)$  is a graph such that $m_1$   attains the minimum. We claim that $m_1=0$.

To the contrary, suppose $m_1>0$. If $m_3>0$, then
  $H$ contains a components $H_1$ of Type (I) and a component $H_2$ of  Type (III). Suppose $V(H_1)=\{v_1,v_2\}$ and $v_3v_4\not\in E(G)$ with $v_3,v_4\in V(H_2)$. Let $X=(x_1,x_2,\dots,x_n)^T$ be the Perron vector of $G$ with $x_i$ corresponding to the vertex $v_i$ for $i=1,2,3,4$.
By symmetry, we have $x_1=x_2, x_3=x_4$.
Let  $$G'=G+v_1v_2+v_3v_4-v_1v_3-v_2v_4.$$ Then by Lemma \ref{lemma7}, we have $\rho(G')\geq \rho(G)$ and hence $G'\in \mathcal{G}(n,n-3)$. Since  $m_1(G')=m_1-1$, we get a contradiction.
If $m_3=0$, then $(m_1,m_2,m_3)=(1,1,0)$ and the unique Type (II) component in $\overline{G-u}$ is $P_{n-3}$, say, $v_1v_2\cdots v_{t}$ with $t=n-3$. Let
$$G'=G +v_2v_3+v_{t-1}v_t-v_3v_{t-1}-v_2v_t.$$
Using the Perron vector of $G$ and the symmetry of $v_i$ and $v_{t-i+1}$ for $i=1,\ldots,t$,  we have $\rho(G')\ge \rho(G)$ and $G'\in \mathcal{G}(n,n-3)$. Moreover, we have $m_1(G')=m_3(G')=1$. By the case $m_3>0$, we can find another graph $G''\in \mathcal{G}(n,n-3)$  with $m_1(G'')=m_1-1$, a contradiction. Hence, we get $m_1=0$.

Now we get $(m_1,m_2)=(0,2)$. Suppose that $H$ contains a  Type (II) component $P_t=v_1v_2\cdots v_t$.
We  define a graph operation on $G^{loop}$ as follows.
\begin{itemize}
{\item Operation 2}:
\begin{equation*}
\Tilde{G}^{loop}=\left\{\begin{aligned}
  & G^{loop}+v_2v_3+v_3v_4-loop(v_3)-v_2v_4,&\text{if }t=4 ;\\
  & G^{loop}+v_2v_3+v_3v_4+v_4v_5-loop(v_3)-loop(v_4)-v_2v_5,&\text{if } t=5;\\
  &G^{loop}+v_2v_3+v_{t-1}v_t-v_3v_{t-1}-v_2v_t,&\text{if } t\geq 6.
\end{aligned}\right.
\end{equation*}
\end{itemize}

Next we prove
\begin{equation}\label{eqh3}
 \rho(\Tilde{G}^{loop})\geq \rho(G^{loop}).
 \end{equation}
 Let $X=(x_1,\ldots,x_n)$ be the Perron vector of   $G^{loop}$ with $x_i$ corresponding to $v_i$ for $i=1,\ldots,t$. By symmetry, we have $x_i=x_{t-i+1}$ for $i=1,\ldots,t$.
By Lemma \ref{lemma7}, we have
\begin{equation}\label{eqh4}
\begin{aligned}
  &\rho(G^{loop})=X^TA(G^{loop})X,\quad \rho(\Tilde{G}^{loop})\geq X^TA(\Tilde{G}^{loop})X
\end{aligned}
\end{equation}
When $t=4$ or $t\ge 6$, by \eqref{eqh4} we have  $$\rho(\Tilde{G}^{loop})-\rho(G^{loop})\ge X^T[A(\Tilde{G}^{loop})-A(G^{loop})]X=0,$$
which leads to \eqref{eqh3}. When $t=5$, let
$$\hat{G}=G^{loop}+v_2v_3+v_4v_5-v_3v_4-v_2v_5,$$
which is a multiple signed graph with a negative edge  $v_3v_4$.
Then $$\Tilde{G}^{loop}=\hat{G}+2v_3v_4-loop(v_3)-loop(v_4).$$
Let $\lambda(\hat{G})$ be an eigenvalue of $\hat{G}$ whose modular is $\rho(\hat{G})$, and let $\hat{X}=(\hat{x}_1,\ldots,\hat{x}_n)^T$ be an unit real eigenvector such that  $\lambda(\hat{G})=\hat{X}^TA(\hat{G})X.$  Notice that the subgraph $\hat{G}-v_3-v_4$ has minimum degree $n-5$. We have $$\rho(\hat{G})\ge \rho(\hat{G}-v_3-v_4)\ge n-5.$$
Considering the equation $\lambda(\hat{G}) \hat{X}=A(\hat{G})\hat{X}$, we have
\begin{equation*}
\left\{\begin{array}{l}
\lambda(\hat{G})\hat{x}_3=\sum_{i=1}^n\hat{x}_i+\hat{x}_3-3\hat{x}_4,\\
\lambda(\hat{G})\hat{x}_4=\sum_{i=1}^n\hat{x}_i+\hat{x}_4-3\hat{x}_3,
\end{array}
\right.
\end{equation*}
which implies $\hat{x}_3=\hat{x}_4$, as $\lambda(\hat{G})\ne 4$.

If $\lambda(\hat{G})>0$, then $\rho(\hat{G})=\lambda(\hat{G})\ge X^TA( \hat{G})X$. Hence, we have
$$\rho( \hat{G}) -\rho(G^{loop})\ge X^T[A( \hat{G})-A(G^{loop})]X=2(x_2x_3+x_4x_5-x_3x_4-x_2x_5)=0,$$
and
$$\rho(\Tilde{G}^{loop})-\rho( \hat{G}) \ge \hat{X}^T[A(\Tilde{G}^{loop})-A( \hat{G})]\hat{X}=2(\hat{x}_3\hat{x}_4 -\hat{x}_3^2-\hat{x}_4^2)=0,$$
which leads to \eqref{eqh3}. If $\lambda(\hat{G})<0$, then $\lambda(\hat{G})$ is the smallest eigenvalue of $G$. Denote by $ \lambda_{max}(\hat{G})$ the maximum eigenvalue of $G$, and by $\lambda_{min}$ the smallest eigenvalue of $\Tilde{G}^{loop}$. Then
$$ \lambda_{max}(\hat{G})\geq X^TA( \hat{G})X \quad \text{and}\quad  \lambda_{min}\le \hat{X}^TA(\Tilde{G}^{loop})\hat{X}.$$ Hence, we have
$$\lambda_{max}(\hat{G}) -\rho(G^{loop})\geq X^T[A( \hat{G})-A(G^{loop})]X=0$$
and
$$\lambda_{min}-\lambda( \hat{G}) \le \hat{X}^T[A(\Tilde{G}^{loop})-A( \hat{G})]\hat{X}=0,$$
which leads to
$$\rho(\Tilde{G}^{loop})\ge -\lambda_{min}\ge -\lambda( \hat{G})\ge \lambda_{max}(\hat{G})\geq \rho(G^{loop}).$$
Therefore, we get \eqref{eqh3}.
Notice that an Operation 2 on $G^{loop}$ generates a cycle and a $P_3$ in  $\overline{G^{loop}}$. By doing Operation 2 on $G^{loop}$  twice, we get a graph  $G_{n-5}^{loop}$ with with  vertex partition
$\pi=\{\{u\},T,\{w_1,w_2\},\{w_3,w_4,w_5,w_6\}\}$ such that $N(u)=T\cup\{w_1,w_2\}$, $\overline{G_{n-5}^{loop}-u}$ is the disjoint union of two paths $w_3w_1w_4$, $w_5w_2w_6$ and some cycles covering $T$. Then  by \eqref{eqh3}, we have $\rho(G_{n-5}^{loop})\geq \rho(G^{loop})$.

By direct computation,  $\pi$ is an equitable partition of $V(G_{n-5}^{loop})$ with quotient matrix $B_{n-5}'=B_{n-5}+2I$, where
  \begin{align}
B_{n-5}=&
\begin{pmatrix}
0 & n-7 & 2 & 0 \\
1 & n-10 & 2 & 4 \\
1 & n-7 & 1 &2\\
0 & n-7 & 1 & 3
\end{pmatrix} \nonumber
\end{align}
has characteristic polynomial
\begin{equation}\label{Eq05}
P(B_{n-5},\lambda)=\lambda^4+(6-n)\lambda^3+(8-3n)\lambda^2+(3n-18)\lambda+5n-17\equiv g(\lambda).
\end{equation}

Since $B_{n-5}$ is an irreducible nonnegative matrix,  $\rho(B_{n-5})=\rho(B'_{n-5})-2$ is the maximum real eigenvalue of $B_{n-5}$.
Recall that $\rho(G^{loop})=\rho(G)+2$. We have $$\rho(G)=\rho(G^{loop})-2\leq \rho(G_{n-5}^{loop})-2=\rho(B'_{n-5})-2=\rho(B_{n-5}).$$

Now we are ready to finish the proof for Case 1. It suffices to compare the maximum real root  of \eqref{Eq05} with those of \eqref{Eq01} and \eqref{Eq02}.

For $\lambda\ge n-3$, we have $f_2(\lambda)>0$  and $g(\lambda)>0$. Let $$t_1=n-3-\frac{2}{n}+\frac{4}{n^2}+\frac{5}{n^3}, \quad t_2=\frac{n}{2},\quad t_3=0,\quad t_4=-1-\frac{2}{n}-\frac{4}{n^2}. $$ If $n\geq 100$, then
\begin{eqnarray*}
f_2(t_1)&=&-3-\frac{35}{n}+\frac{244}{n^2}+\frac{52}{n^3}-\frac{969}{n^4}
-\frac{194}{n^5}+\frac{2076}{n^6}+\frac{718}{n^7}-\frac{2789}{n^8}\\
&&-\frac{1995}{n^9}+\frac{1400}{n^{10}}+\frac{2000}{n^{11}}+\frac{625}{n^{12}}
\leq -3-\frac{35}{n}+\sum_{i=2}^{i=12}\frac{3000}{n^i}<0,\\
g(t_1)&=&1-\frac{62}{n}+\frac{190}{n^2}+\frac{172}{n^3}-
\frac{817}{n^4}-\frac{420}{n^5}+\frac{1750}{n^6}+\frac{808}{n^7}
-\frac{2489}{n^8}\\
&&-\frac{1870}{n^9}+\frac{1400}{n^{10}}+\frac{2000}{n^{11}}+\frac{625}{n^{12}}
\geq 1-\frac{62}{n}-\sum_{i=2}^{i=12}\frac{3000}{n^i}>0,\\
 g(t_2)&=&-\frac{n^4}{16}+\frac{7n^2}{2}-4n-17=\frac{56-n^2}{16}n^2-4n-17<0,\\
g(t_3)&=&5n-17>0, \\
g(t_4)&=&-8 +\frac{16}{n} +\frac{72}{n^2}- \frac{32}{n^3}
-\frac{48}{n^4} + \frac{256}{n^6} + \frac{512}{n^7} + \frac{256}{n^8}
\leq -8+\frac{16}{n}+\sum_{i=2}^{i=8}\frac{600}{n^i}<0 .
\end{eqnarray*}
If $59\leq n\leq 99$, a direct computation (which we check by computers) also confirms that
\begin{equation}\label{eqh120}
g(t_4)<0, \quad g(t_3)>0,  \quad g(t_2)<0,  \quad g(t_1)>0.
\end{equation}
Since $t_4<t_3<t_2<t_1$, by \eqref{eqh120},  $g(\lambda)$ has at least three distinct roots in $(t_4,t_1)$. The fourth root of $g(\lambda)$ cannot be located in $(t_1,n-3)$, as both $g(t_1)$ and $g(n-3)$ are positive. On the other hand, since $f_2(t_1)<0$ and $f_2(n-3)>0$, $f_2(\lambda)$ has a root $\lambda_0\in (t_1,n-3)$.   Hence, the maximum real root  of $f_2(\lambda)$ is larger than the maximum real root   of $g(\lambda)$.
Moreover, since
$$f_1(\lambda)-f_2(\lambda)=(8-n)\lambda-n<0$$
for all positive $\lambda$, the maximum real root  of $f_1(\lambda)$ is larger than the maximum real root   of $f_2(\lambda)$.

Therefore, when $n$ is odd, $f_2(\lambda)$ attains the maximum real root among all the characteristic polynomials of nonregular connected graphs of order $n$ with maximum degree $n-3$, and $G\in \mathcal{G}(n,n-3)$  is isomorphic to $H_2(n)$.  If $n$ is even, $f_1(\lambda)$ attains the maximum real root among all the characteristic polynomials of nonregular connected graphs of order $n$ with maximum degree $n-3$, and $G\in \mathcal{G}(n,n-3)$  is isomorphic to $H_1(n)$.

{\it Case 2.} $|S|=2$.
Suppose that $V(G)=[n]$, $S=\{u,v\}$ and $G$ has  degree sequence
$(\Delta,\Delta,\ldots,\Delta,d_u,d_v)$, where $d_u=d_G(u),d_v=d_G(v)$ and $d_v\leq d_u\leq n-4$.
Let $X=(x_1,x_2,\ldots,x_n)^T$ be the Perron vector of $G$ with $x_k$ corresponding to the vertex $k$.
By Lemma \ref{lemm2}  and Lemma \ref{lemm4}, we have
\begin{equation*}\label{eqh5}
x_v \le x_u\quad \text{and}\quad  N_T(v)\subseteq N_T(u).
\end{equation*}
Let $$T_1=N(u)\cap N(v), \quad T_2=N(u)\setminus N[v],\quad T_3=V(G)\setminus N[\{u,v\}].$$
Then $\{T_1,T_2,T_3\}$ is a partition of $T$ with $|T_3|\geq 3$, as $N_T(v)\subseteq N_T(u)$ and $d_v\leq d_u\leq n-4$. Since $  \sum_{w\in V(G)}d(w)=(n-2)(n-3)+d_u+d_v$ is even,  both $d_u+d_v$ and $|T_2|=d_u-d_v$ are even.

For any $w\in T_3$, $d(w)=n-3$ implies that
$$T_1\cup T_2\subseteq N(w)\quad \text{for all}\quad w\in T_3.$$
Denoted by $H$ the complement of $G-u-v$, which consists of $k$ connected components $H_1,\ldots, H_k$.
Then similarly as in Case 1, $H_i$ is one of  Type (I), Type (II) and Type (III) for $i=1,\ldots,k$. (Here $T_1$ and $T_2$ are different from Case 1.)

We first prove that
\begin{equation}\label{eqh7}
d(v)=1\quad\text{or}\quad d(u)=d(v).
\end{equation}
Suppose \eqref{eqh7} does not hold. Then we can do the following   operations on $G$ for suitable vertices $t_1,t_2,t_3$ to get a connected nonregular graph $G'$ with maximum degree $n-3$.
\begin{itemize}
{\item Operation 3:} $G'=G+ut_1+vt_2-t_1t_2$, where $t_1,t_2$ are distinct vertices in $T_3$.

{\item Operation 4:} $G'=G-vt_1-ut_2+t_1t_2$, where $t_1\in T_1$ and $t_2\in T\setminus N[t_1]$.

{\item Operation 5:} $G'=G-vt_1+t_1t_2-t_2t_3+ut_3$, where $t_1\in T_1$, $t_2\in T\setminus N[t_1],t_3\in T_3$.
\end{itemize}

Since $G\in \mathcal{G}(n,n-3)$, we have $\rho(G')\le \rho(G)$ for all the above graphs $G'$.
  Let $X=(x_1,\ldots,x_n)$ be the Perron vector of $G$ with $x_i$ corresponding to the vertex $i$. Denote by  $$\lambda=\rho(G), \quad m=\min \{x_i|i\in T\},\quad M=\max \{x_i|i\in T\}.$$
Since $\bar{d}(G)\leq \lambda\leq \Delta$,  we have
$$n-5<\frac{(n-2)(n-3)+d_u+d_v}{n}=\bar{d}(G)\leq \lambda\leq \Delta=n-3.$$
Recall that $x_v\le x_u<x_t$ for all $t\in T$. Choose $i,j\in T$ such that $x_i=M$ and $x_j=m$.
Considering the characteristic equation  of $G$, we have
\begin{eqnarray*}
   \lambda x_i&\leq& \Sigma_{w\in V(G)}x_w-x_u-x_v-x_i,\label{eq24}\\
  \lambda x_j&\geq& \Sigma_{w\in V(G)}x_w-x_j-2M,\label{eq25}
\end{eqnarray*}
which lead to
\begin{equation*}
\begin{aligned}
  &(\lambda+1)M  \leq \Sigma_{w\in V(G)}x_w-x_u-x_v,  \\
  &(\lambda+1)m\geq \Sigma_{w\in V(G)}x_w-2M.
\end{aligned}
\end{equation*}
It follows that
\begin{equation}
\begin{aligned}
  &(\lambda+1)(M-m)\leq 2M-(x_u+x_v).\label{19}
\end{aligned}
\end{equation}

In Operation 4, $\rho(G')\le \rho(G)$ implies $-x_vx_{t_1}-x_ux_{t_2}+x_{t_1}x_{t_2}\leq 0$, which leads to
\begin{equation}\label{21}
 x_u+x_v\ge \frac{m^2}{M}.
\end{equation}
By (\ref{19}) and (\ref{21}), we have
\begin{equation*}
\begin{aligned}
  &(\lambda+1)(M-m)\leq 2M-\frac{m^2}{M},
\end{aligned}
\end{equation*}
which is equivalent to
\begin{equation*}
\begin{aligned}
  &t^2-(\lambda+1)t+(\lambda-1)\leq 0,~~\text{where}~~ t=\frac{m}{M}\leq 1.
\end{aligned}
\end{equation*}
It follows that
$$\frac{\lambda+1-\sqrt{\lambda^2-2\lambda+5}}{2}\leq t\leq 1,$$
which leads to
\begin{equation}
 \frac{M}{m}=\frac{1}{t}\leq \frac{\lambda+1+\sqrt{\lambda^2-2\lambda+5}}{2(\lambda-1)}< 1+\frac{1}{\lambda-1}+\frac{1}{(\lambda-1)^2},\label{24}
\end{equation}
as $\lambda^2-2\lambda+5<(\lambda-1+\frac{2}{\lambda-1})^2$.

In Operation 5, for convenience we assume $t_i=i$ for $i=1,2,3$. Since $\rho(G')\le \rho(G)$, we have
\begin{equation*}
   -x_vx_1+x_1x_2-x_2x_3+x_ux_3\leq 0,~~ i.e.,~~
   x_1(x_2-x_v)+x_3(x_u-x_2)\leq 0
  \end{equation*}
  Notice that $x_u-x_2<0, x_1\ge m$ and $ x_3\le M$. We get
 $$m(x_2-x_v)+M(x_u-x_2)\leq 0,~~ i.e.,~~
  Mx_u-mx_v+(m-M)x_2\leq 0,$$
 which forces
 \begin{eqnarray*}
 x_u-x_v\le x_u-\frac{m}{M}x_v\le \frac{1}{M}(Mx_u-mx_v)\leq \frac{x_2}{M}(M-m)\le M-m.
\end{eqnarray*}
Considering the characteristic equation of $G$ on the vertices $u$ and $v$,  we get
\begin{equation} \label{eq26}
  (d_u-d_v)m \leq (\lambda+1)(x_u-x_v)\leq (\lambda+1)(M-m).
\end{equation}
By (\ref{24}) and (\ref{eq26}) we have:
\begin{equation*}
\begin{aligned}
  &d_u-d_v \leq (\lambda+1)(\frac{M}{m}-1)< \frac{\lambda(\lambda+1)}{(\lambda-1)^2}\leq \frac{(n-3)(n-2)}{(n-6)^2}<2,
\end{aligned}
\end{equation*}
which contradicts the assumption that $d_u\ne d_v$, as $d_u-d_v$ is even.
 Therefore, we get \eqref{eqh7}. Next we distinguish two subcases.

{\it Subcase 2.1.}  $d(u)=d(v)$. Then  $ T_2=\emptyset$ and $\{T_1,T_3\}$ is a partition of $T$. Since $d(w)=n-3$ for all $w\in T$,
  $\overline{G[T_1]}$ is  2-regular, which forces   $\delta \equiv d(u)=d(v)\geq 4$.  We denote by $G_{\delta,\delta}$ the graph $G$ to indicate the degree $d(u)=d(v)=\delta$. Then
$\pi=(\{u,v\},T_1,T_3)$ is an equitable partition of $G_{\delta,\delta}$ with  quotient matrix
\begin{align*}
B_{\delta,\delta}&=
\begin{pmatrix}
1 & \delta-1 & 0 \\
2 & \delta-4 & n-\delta-1 \\
0 & \delta-1 & n-\delta-2 \\
\end{pmatrix},
 4\leq \delta \leq n-4,
\end{align*}
whose characteristic polynomial is
\begin{equation}\label{eq33}
P(B_{\delta,\delta},\lambda)=\lambda^3+(5-n)\lambda^2+(3-2n)\lambda-2\delta^2+(2n-4)\delta+n-3.
\end{equation}
By Lemma \ref{lemm6}, we have $\rho(G_{\delta,\delta})=\rho(B_{\delta,\delta})$, which is the maximum real root of \eqref{eq33}.

For any $\delta_1,\delta_2$ such that $4\le\delta_1<\delta_2\le n-4$, we have
\begin{align*}
P(B_{\delta_1,\delta_1},\lambda)-P(B_{\delta_2,\delta_2},\lambda)
=2(\delta_2-\delta_1)(\delta_1+\delta_2-n+2).\label{equ29}
\end{align*}
Applying Lemma \ref{lemm8}, we have
  $$\rho(B_{\delta_1,\delta_1})\le\rho(B_{\delta_2,\delta_2})\quad \text{for}\quad \delta_1+\delta_2\ge n-2$$
  and
  $$\rho(B_{\delta_1,\delta_1})\ge\rho(B_{\delta_2,\delta_2}) \quad \text{for}\quad \delta_1+\delta_2\le n-2$$
  with equality if and only if $\delta_1+\delta_2=n-2$. Thus, we have $$\rho(B_{n-4,n-4})>\rho(B_{\delta,\delta}) \quad \text{for all}\quad  4\le\delta<n-4.$$
  By \eqref{eq33}, we have
\begin{equation}\label{eq35}
P(B_{n-4,n-4},\lambda)=\lambda^3+(5-n)\lambda^2+(3-2n)\lambda+5n-19.
 \end{equation}

 When $n$ is odd, we compare the largest real root  of \eqref{Eq02} and \eqref{eq35}.
 For any $n-3\ge\lambda\geq \rho(B_2) \ge n-4+\frac{5}{n}=\bar{d}(G_2^{(2)})$, we have
\begin{align*}
\lambda P(B_{n-4,n-4},\lambda)-P(B_2,\lambda)=&-2\lambda^2+(2n-2)\lambda-2n+2\\
=&-2[(\lambda-\frac{n-1}{2})^2-\frac{n^2-6n+5}{4}]\\\ge&-2(n-3)^2+(2n-2)(n-3)-2n+2=2n-10>0,
\end{align*}
which implies $\rho(B_{n-4,n-4})<\rho(B_2)$.\par

When $n$ is even, we compare the largest real root  of \eqref{Eq01} and \eqref{eq35}. For any $n-3\ge\lambda\ge \rho(B_1)\geq n-4+\frac{4}{n}=\bar{d}(G_1)$, we have
\begin{align*}
\lambda P(B_{n-4,n-4},\lambda)-P(B_1,\lambda)=&-2\lambda^2+(3n-10)\lambda-n+2\\
=&-2[(\lambda-\frac{3n-10}{4})^2-(\frac{3n-10}{4})^2-\frac{n-2}{2}]\\
\ge&-2(n-3)^2+(3n-10)(n-3)-n+2\\=&3n^2-22n+38>0,
\end{align*}
which implies $\rho(B_{n-4,n-4})<\rho(B_1)$.

{\it Subcase 2.2.}  $d_v=1,d_u=\delta \geq 3$. We denote by $G_{\delta,1}$ the graph $G$. Then
$\pi=(\{v\},\{u\},N(u)\setminus \{v\},V(G)\setminus N[u])$ is an equitable partition of $G_{\delta,1}$ with the quotient matrix
\begin{align*}
B_{\delta,1}&=
\begin{pmatrix}
0 & 1 & 0 & 0 \\
1 & 0 & \delta-1 & 0 \\
0 & 1 & \delta-3 & n-\delta-1 \\
0 & 0 & \delta-1 & n-\delta-2 \\
\end{pmatrix},
 3\leq \delta\leq n-4,
\end{align*}
whose characteristic polynomial is
\begin{equation}\label{eq36}
P(B_{\delta,1},\lambda)=\lambda^4+(5-n)\lambda^3+(5-2n)\lambda^2+(-\delta^2+\delta n-\delta-3)\lambda-\delta+2n-5.
\end{equation}
By Lemma \ref{lemm6}, we have $\rho(G_{\delta,1})=\rho(B_{\delta,1})$, which is the maximum real root of \eqref{eq36}.

  For any  $\delta_1,
\delta_2$ such that  $ 3\le \delta_1<\delta_2 \le n-4$, we have
\begin{align*}
&P(B_{\delta_1,1},\lambda)-P(B_{\delta_2,1},\lambda)\\&=(-\delta_1^2+\delta_1 n-\delta_1+\delta_2^2-\delta_2 n+\delta_2)\lambda-\delta_1+\delta_2=(\delta_2-\delta_1)[(\delta_2+\delta_1-n+1)\lambda+1]
\end{align*}
which implies
$$\rho(B_{\delta_2,1})\ge \rho(B_{\delta_1,1})\quad \text{for}\quad \delta_1+\delta_2\ge n-1$$
and
 $$\rho(B_{\delta_2,1})\le \rho(B_{\delta_1,1})\quad \text{for}\quad \delta_1+\delta_2\le n-1,$$
 with equality if and only if $\delta_1+\delta_2=n-1$.

  If $n$ is odd, we have $\rho(B_{n-4,1})\ge\rho(B_{\delta,1})$ for $ 3\le \delta \le n-4$, as $\delta$ is odd. Since  $$P(B_{n-4,1},\lambda)-P(B_2,\lambda)=2\lambda-n+1>0\quad \text{for}\quad \lambda>\frac{n}{2},$$ applying Lemma \ref{lemma8}  we have $\rho(B_2)>\rho(B_{n-4,1})$.

  If $n$ is even, we have $\rho(B_{3,1})\ge\rho(B_{\delta,1})$ for  $ 3\le \delta \le n-5$. Since   $$P(B_{3,1},\lambda)-P(B_1,\lambda)=(n-6)(\lambda+1)>0,$$  applying Lemma \ref{lemma8}  we have $\rho(B_1)>\rho(B_{3,1})$.

  Therefore, we have $|S|\ne 2$. Combining this with  Case 1, we  complete  the proof. ~~~~  $ \square$
\end{proof}
	
\section*{Acknowledgement}
This work was supported by the National Natural Science Foundation of China (No.12171323), Guangdong Basic and Applied Basic Research Foundation (No. 2022A1515011995).

\end{document}